\numberwithin{equation}{section}
\newtheorem{thm}{Theorem}[section]
\newtheorem{cor}[thm]{Corollary}
\newtheorem{lem}[thm]{Lemma}
\newtheorem{prop}[thm]{Proposition}
\newtheorem{defn}[thm]{Definition}
\newtheorem{rem}[thm]{Remark}
\DeclareMathOperator{\Ext}{Ext} \DeclareMathOperator{\Supp}{Supp}
\DeclareMathOperator{\V}{V} \DeclareMathOperator{\Hom}{Hom}
\DeclareMathOperator{\Ker}{Ker} \DeclareMathOperator{\Coker}{Coker}
\DeclareMathOperator{\Image}{Im} 
\DeclareMathOperator{\cd}{cd} 
 \DeclareMathOperator{\Max}{Max}
\DeclareMathOperator{\lc}{H} 
 \DeclareMathOperator{\Spec}{Spec}
\DeclareMathOperator{\G}{\Gamma}
\DeclareMathOperator{\ara}{ara}
\newcommand{\fa}{\mathfrak{a}}
\newcommand{\fb}{\mathfrak{b}}
\newcommand{\fm}{\mathfrak{m}}
\newcommand{\fp}{\mathfrak{p}}
\newcommand{\lo}{\longrightarrow}
\begin{document}

\title[Abelian category of cominimax modules and local cohomology]
{Abelian category of cominimax modules and local cohomology}

\bibliographystyle{amsplain}


   \author[M. Aghapournahr]{Moharram Aghapournahr}
\address{Department of Mathematics, Faculty of Science, Arak University,
Arak, 38156-8-8349, Iran.}
\email{m.aghapour@gmail.com}



\keywords{Local cohomology, minimax modules, cominimax modules,  Melkersson subcategory, Abelian category}

\subjclass[2010]{13D45, 13E05, 14B15.}


\begin{abstract}
Let $R$ be a commutative Noetherian ring, $\fa$ an ideal of $R$,
 $M$ an arbitrary $R$-module and $X$ a finite $R$-module. We prove that the category of $\fa$-cominimax modules is a Melkersson subcategory of $R$-modules whenever $\dim R\leq 1$ and is an Abelian subcategory whenever $\dim R\leq 2$. We prove a characterization theorem for $\lc_{\fa}^{i}(M)$ and $\lc_{\fa}^{i}(X,M)$ to be $\fa$-cominimax for all $i$, whenever one of the following cases holds:
(a) $\ara (\fa)\leq 1$, (b) $\dim R/\fa \leq 1$ or (c) $\dim R\leq 2$.
\end{abstract}



\maketitle


\section{Introduction}

Throughout  this paper $R$ is a commutative Noetherian ring with non-zero
identity and $\fa$ an ideal of $R$. For an $R$-module $M$,  the $i^{th}$
local cohomology module $M$ with respect to ideal $\fa$ is defined as
\begin{center}
$\lc^{i}_{\fa}(M) \cong \underset{n}\varinjlim \Ext^{i}_{R}(R/{\fa}^{n},M).$
\end{center}

Also the generalized local
	cohomology module
	\begin{center}
		$\lc^{i}_{\fa}(X,M) \cong \underset{n}\varinjlim
		\Ext^{i}_{R}(X/{\fa}^{n}X,M).$
	\end{center}
	for all $R$--modules $M$ and $X$ was introduced by Herzog in \cite{He}.
	Clearly it is a generalization of ordinary local cohomology module.

We refer the reader to \cite{BSh} for more details about the local cohomology.

In \cite{Gro}, Grothendieck conjectured that for any ideal $\fa$ of $R$ and any finitely generated $R$-module $M$,
$\Hom_R(R/\fa , \lc^{i}_{\fa}(M))$ is a finitely generated $R$-module for all $i$.  Hartshorne \cite{Har} provided a
counterexample to Grothendieck’s conjecture. He defined an $R$-module $M$ to be $\fa$--{\it cofinite} if $\Supp_R(M)\subseteq \V(\fa)$ and $\Ext^{j}_{R}(R/{\fa},M)$ are finitely generated for all $j$ and he asked:\\

(i) \emph{ For which rings $R$ and ideals $\fa$ are the modules $\lc^{i}_{\fa}(M)$
	$\fa$-cofinite for all $i$ and all finitely generated
	modules $M$?}\\

(ii) \emph{ Whether the category $\mathscr{C}(R,\fa)_{cof}$
	of $\fa$-cofinite modules forms an Abelian subcategory of the category of all $R$-modules?
	That is, if $f: M\longrightarrow N$ is an $R$-homomorphism of
	$\fa$-cofinite modules, are $\Ker f$ and ${\Coker} f$ $\fa$-cofinite?}\\

With respect to the question (i), there are several papers devoted to this question; for example see \cite{HK, DM, MV, Mel, Mel1, BN2, BN, AB}.

With respect to the question (ii), Hartshorne with an example showed that this is
not true in general. However, it is proved in \cite[Theorem 2.2 (ii)]{DFT}, \cite[Theorem 2.6]{Mel1} and \cite[Theorem 7.4]{Mel} that the category $\mathscr{C}(R,\fa)_{cof}$
of $\fa$-cofinite modules forms an Abelian subcategory of the category of all $R$-modules respectively in cases $\cd({\fa},R)\leq 1$, $\dim R/{\fa}\leq 1$ and $\dim R\leq 2$.

Recall that a module $M$ is a \emph{minimax} module if there is a
finitely generated submodule $N$ of $M$ such that the quotient module $M/N$
is Artinian.  Minimax modules have been studied by
Z\"{o}schinger in
\cite{Zrmm}. Note that for a complete Noetherian local ring, the class of minimax modules is the same as the class of Matlis reflexive modules (see \cite{E} and \cite{Zi}).
Since the class of minimax modules is a generalization of Matlis reflexive modules, thus the study of minimax modules is as important as the study of Matlis reflexive modules. As a generalization of $\fa$-cofinite modules
in \cite{ANV}, the authors, introduced the concept of {\it $\fa$-cominimax} modules or cominimax modules with respect to $\fa$. An $R$-module $M$ is $\fa$-cominimax module if $\Supp_R(M)\subseteq \V(\fa)$ and $\Ext^{i}_{R}(R/\fa,M)$ is a minimax module for all $i$.

 Since the concept of minimax modules is a natural generalization of the finitely generated modules,
 many authors studied the minimaxness and cominimaxness of local cohomology modules and answered the Hartshorne's
 question in the class of minimax modules (see for example \cite{A3, A1, A2, AM, BN2, HR, BNS1, I}).

Recall also that a class of $R$-modules is said to be a {\it Serre subcategory of the category of $R$-modules}, when it is closed under taking submodules, quotients and extensions and a full subcategory $\mathcal{S}$ of the category of  $R$-modules is said to be { \it Melkersson  subcategory} with respect to the ideal $\fa$   if for any $\fa$-torsion  $R$-module $M$, $(0 :_{M} \fa)\in \mathcal{S}$ implies $M \in \mathcal{S}$ (see \cite{AM} and \cite{ATV}).\\ 
 
In this paper, in Section 2, we bring some preliminary results that we need to prove our main theorems. Now, it is natural to ask  whether the category $\mathscr{C}(R,\fa)_{comin}$
of $\fa$-cominimax modules forms an Abelian subcategory of the category of all $R$-modules? Among other things, in Section 3, We prepare an   affirmative answer to this question in the case $\dim R\leq 2$. More precisely we prove that:

\begin{prop}\textsl{{\rm(}See Proposition \ref{melc}{\rm)}}
	Let $R$ be a Noetherian ring with $\dim R\leq 1$. Let  $M$ be an  $R$-module such $\Supp_{R}(M) \subseteq \V(\fa)$ and $(0 :_{M} \fa)$  is a minimax $R$-modules. Then $M$
	is minimax.  In this case the class of $\fa$-cominimax $R$-modules is a 
	Serre subcategory of $R$-modules.
\end{prop}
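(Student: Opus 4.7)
My plan comprises three technical steps and one formal conclusion.

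\emph{Reductions.} Because $\Supp_R(M)\subseteq\V(\fa)$, the module $M$ is $\fa$-torsion, so $M=\bigcup_{n\ge 1} M_n$ with $M_n:=(0:_M\fa^n)$. Every $\fp\in\Ass_R(M)$ contains $\fa$, hence the embedded copy $R/\fp\hookrightarrow M$ is $\fa$-annihilated and lies in $M_1$; therefore $\Ass_R(M)=\Ass_R(M_1)$, a finite set since minimax modules have only finitely many associated primes. By $\dim R\le 1$, every non-maximal prime in $\Ass_R(M)$ is a minimal prime of $R$, whose localization $R_\fp$ is Artinian local.

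\emph{Each $M_n$ is minimax.} I argue by induction on $n$; the case $n=1$ is the hypothesis. For the inductive step, the well-defined assignment $m\mapsto(\bar a\mapsto am)$ gives an injection
\[
M_{n+1}/M_n\ \hookrightarrow\ \Hom_R\!\bigl(\fa^n/\fa^{n+1},\,M_1\bigr).
\]
Since $\fa^n/\fa^{n+1}$ is finitely generated, a surjection $R^k\twoheadrightarrow\fa^n/\fa^{n+1}$ embeds the right-hand side into $M_1^k$, which is minimax; hence $M_{n+1}/M_n$ is a submodule of a minimax module, hence minimax, and closure of the minimax class under extensions applied to $0\to M_n\to M_{n+1}\to M_{n+1}/M_n\to 0$ yields $M_{n+1}$ minimax.

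\emph{Passage to the union — the main obstacle.} Going from ``each $M_n$ is minimax'' to ``$M$ is minimax'' is the crux, since an arbitrary directed union of minimax modules need not be minimax; this is where $\dim R\le 1$ is used essentially. The plan is to produce a finitely generated $F\subseteq M$ so that $M/F$ is Artinian, whence the sequence $0\to F\to M\to M/F\to 0$ exhibits $M$ as minimax. Start with a finitely generated $F_0\subseteq M_1$ making $M_1/F_0$ Artinian. For any non-maximal $\fp\in\Supp M$ (a minimal prime of $R$ by the first step), $(F_0)_\fp=(M_1)_\fp$ is finitely generated and Artinian over $R_\fp$, so Melkersson's theorem over $R_\fp$ yields $M_\fp$ of finite length. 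Exploiting this local finiteness at the finitely many such primes, the plan is to enlarge $F_0$ to a finitely generated $F\subseteq M$ (with $F\supseteq F_0$) for which, setting $L:=\{m\in M:\fa m\subseteq F\}$, the short exact sequence
\[
0\longrightarrow (M_1+F)/F\longrightarrow (0:_{M/F}\fa)\longrightarrow L/(M_1+F)\longrightarrow 0
\]
has Artinian middle term. The first term is a quotient of the Artinian $M_1/F_0$, hence Artinian; the third term is annihilated by $\fa$ and embeds in the finitely generated $\Hom_R(\fa,F)$ via $m+M_1+F\mapsto(a\mapsto am)$, hence is a finitely generated $R/\fa$-module. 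The delicate point—and the real use of $\dim R\le 1$—is to choose $F$ so that this third term has support only at maximal ideals; it then has Krull dimension zero, so it is of finite length, hence Artinian. Consequently $(0:_{M/F}\fa)$ is Artinian, and Melkersson's theorem applied to the $\fa$-torsion module $M/F$ gives $M/F$ Artinian.

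\emph{Serre subcategory.} Granted the first assertion, any $\fa$-cominimax $N$ satisfies $(0:_N\fa)$ minimax and $\Supp N\subseteq\V(\fa)$, so $N$ is minimax. Conversely, a minimax module $N$ with $\Supp N\subseteq\V(\fa)$ is $\fa$-cominimax, because computing $\Ext^i_R(R/\fa,N)$ via a finitely generated free resolution of $R/\fa$ presents it as a subquotient of a finite direct sum of copies of $N$, which remains minimax. Therefore the $\fa$-cominimax modules coincide with the minimax modules supported in $\V(\fa)$; this class is the intersection of two Serre subcategories of the category of $R$-modules, hence is itself Serre.
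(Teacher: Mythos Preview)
Your strategy is viable and differs from the paper's, but as written it has one technical slip and one step that is only announced, not carried out.

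\medskip
\textbf{The slip.} The map $m+M_1+F\mapsto(a\mapsto am)$ is \emph{not} well defined on $L/(M_1+F)$: for $f\in F$ the homomorphism $(a\mapsto af)\in\Hom_R(\fa,F)$ is generally nonzero, so $F$ is not in the kernel. What is true is that $m\mapsto(a\mapsto am)$ on $L$ has kernel exactly $M_1$, giving $L/M_1\hookrightarrow\Hom_R(\fa,F)$; then $L/(M_1+F)$, being a \emph{quotient} of $L/M_1$, is still a finitely generated $R/\fa$-module. So your conclusion survives, but the stated embedding does not.

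\medskip
\textbf{The gap.} You correctly identify the crux as choosing $F\supseteq F_0$ so that $L/(M_1+F)$ is supported only at maximal primes, and you gesture toward the finite-length localizations $M_\fp$, but you never make the choice. Here is how to close it. The non-maximal primes in $\Supp_R(M)$ are minimal in $\Supp_R(M)$, hence lie in the finite set $\Ass_R(M)$. For each such $\fp$, the module $M_\fp$ has finite length over the Artinian local ring $R_\fp$, so you can pick finitely many elements of $M$ whose images generate $M_\fp$. Adjoin all of these to $F_0$ to form $F$. Then $F_\fp=M_\fp$, hence $(M/F)_\fp=0$, for every non-maximal $\fp$; in particular $L/(M_1+F)\subseteq M/F$ is supported only at maximal ideals, and the rest of your argument goes through.

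\medskip
\textbf{Comparison with the paper.} The paper bypasses the construction of $F$ altogether. It passes to $\overline R=R/\Gamma_\fa(R)$ and $\overline M=M/(0:_M\fa^n)$; over $\overline R$ the ideal $\overline\fa$ contains a non-zerodivisor, so $\dim\overline R/\overline\fa\le 0$. Then $(0:_{\overline M}\overline\fa)$ is automatically a zero-dimensional minimax module, hence Artinian, and Melkersson's criterion makes $\overline M$ Artinian in one stroke; the exact sequence $0\to(0:_M\fa^n)\to M\to\overline M\to 0$ finishes. Your route stays over $R$ and is more constructive; the paper's change-of-rings trick is shorter because it forces $R/\fa$ to be Artinian and thereby removes the ``delicate point'' entirely.
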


\begin{thm}\textsl{{\rm(}See Theorem \ref{dim2}{\rm)}}
	Let $R$ be a Noetherin  ring  with $\dim R\leq 2$. Then for any ideal $\fa$ of $R$, the full subcategory of $\fa$-cominimax $R$-modules of the category of $R$-modules {\rm (}$\mathscr{C}(R,
	\fa)_{comin}${\rm )} is Abelian.
\end{thm}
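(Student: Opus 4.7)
My plan is to reduce the Abelian claim to showing that the image of a morphism between $\fa$-cominimax modules is itself $\fa$-cominimax, and then to invoke a preliminary characterization of cominimaxness specific to $\dim R \le 2$.

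First I would verify a general two-out-of-three principle for $\fa$-cominimaxness: for any short exact sequence $0\to A\to B\to C\to 0$ of $R$-modules all having supports in $\V(\fa)$, if two of $A,B,C$ are $\fa$-cominimax then so is the third. This is immediate from the long exact sequence of $\Ext_R^{\bullet}(R/\fa,-)$ together with Z\"oschinger's theorem that the class of minimax $R$-modules is a Serre subcategory: the relevant $\Ext^{i}_R(R/\fa,-)$ of the third module sits in a three-term exact sequence whose outer terms are minimax and is therefore minimax itself. This step uses no hypothesis on $\dim R$.

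Now given $f\colon M\to N$ between $\fa$-cominimax modules, set $K=\Ker f$, $L=\Image f$, $C=\Coker f$. The supports of $K,L,C$ clearly lie in $\V(\fa)$. Applying the two-out-of-three principle to $0\to K\to M\to L\to 0$ (together with cominimaxness of $M$) and to $0\to L\to N\to C\to 0$ (together with cominimaxness of $N$) shows that once $L$ is known to be $\fa$-cominimax, both $K$ and $C$ follow. So the entire theorem reduces to proving that $L=\Image f$ is $\fa$-cominimax.

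For this final step I would invoke a characterization established as a Section~2 preliminary, modeled on Melkersson's analogous criterion \cite{Mel} for $\fa$-cofinite modules: when $\dim R\le 2$, a module with support in $\V(\fa)$ is $\fa$-cominimax iff $\Hom_R(R/\fa,-)$ and the first few $\Ext^{i}_R(R/\fa,-)$ applied to it are minimax. For $L$, the $\Hom$ condition is immediate because $\Hom_R(R/\fa,L)\hookrightarrow \Hom_R(R/\fa,N)$ embeds into a minimax module. The remaining $\Ext$ conditions are verified by chasing the long exact sequences of the two short exact sequences above, exploiting the minimaxness of every $\Ext^{i}_R(R/\fa,M)$ and $\Ext^{i}_R(R/\fa,N)$ and the crucial fact that $L$ appears simultaneously as a quotient of $M$ and as a submodule of $N$, which lets us compare the two long exact sequences and conclude minimaxness of each required $\Ext^{i}_R(R/\fa,L)$.

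The main obstacle is the Section~2 characterization itself: a ``finitely many $\Ext$'s minimax implies cominimax'' criterion under $\dim R\le 2$. Its proof should follow Melkersson's template for the $\fa$-cofinite case, systematically substituting minimax modules for finitely generated modules throughout. This substitution is legitimate because, by Z\"oschinger's theorem, minimax modules share all the Serre closure properties of finitely generated modules, and thus every step of Melkersson's argument carries through in the minimax setting.
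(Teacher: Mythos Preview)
Your overall strategy matches the paper's: reduce via the two-out-of-three principle and invoke the key criterion that, for $\dim R\le 2$, an $R$-module with support in $\V(\fa)$ is $\fa$-cominimax as soon as $\Ext^i_R(R/\fa,-)$ is minimax for $i\le 1$ (this is the paper's Corollary~\ref{cor 2.4}, obtained from the reduction Theorem~\ref{2.2} together with the cited $\dim R/\fa\le 1$ case).

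There is, however, a subtle misstep in where you aim the criterion. You propose to apply it directly to $L=\Image f$ and assert that the needed minimaxness of $\Ext^1_R(R/\fa,L)$ follows by ``chasing the long exact sequences.'' It does not: from the two long exact sequences and minimaxness of all $\Ext^i_R(R/\fa,M)$, $\Ext^i_R(R/\fa,N)$ alone, every attempt to bound $\Ext^1_R(R/\fa,L)$ runs into either $\Ext^2_R(R/\fa,K)$ or $\Hom_R(R/\fa,C)$, neither of which is yet known to be minimax, and the chase becomes circular. The paper sidesteps this by applying the criterion to $K=\Ker f$ instead (Corollary~\ref{d2}): one has $\Hom_R(R/\fa,K)\hookrightarrow\Hom_R(R/\fa,M)$ minimax, and the segment
\[
\Hom_R(R/\fa,L)\lo\Ext^1_R(R/\fa,K)\lo\Ext^1_R(R/\fa,M)
\]
combined with your own observation $\Hom_R(R/\fa,L)\hookrightarrow\Hom_R(R/\fa,N)$ gives minimaxness of $\Ext^1_R(R/\fa,K)$. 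Thus $K$ is $\fa$-cominimax, and then $L$ and $C$ follow by two-out-of-three. So your plan is essentially right, but the criterion must be aimed at the kernel rather than the image; the asymmetry comes precisely from the fact that the criterion needs only $\Ext^0$ and $\Ext^1$, and these are controllable for $K$ but not directly for $L$.
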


In Section 4, we study relationship between cominimaxness of local cohomology and generalized local cohomology modules and we prove a characterization result (see Corollary \ref{min}), in the cases  (a) $\ara (\fa)\leq 1$, (b) $\dim R/\fa \leq 1$ or (c) $\dim R\leq 2$.\\

	Throughout this paper, $R$ will always be a commutative Noetherian ring with non-zero identity and $\fa$ and $\fb$ will be  ideals of $R$. We shall use $\Max R$ to denote the set of all maximal ideals of $R$. Also, for an ideal $\fa$ of $R$, we denote $\{\mathfrak p \in \Spec R:\, \mathfrak p\supseteq \fa \}$ by $V(\fa)$. For any unexplained notation and terminology we refer the reader to \cite{BSh}, \cite{BH} and \cite{Mat}.




\section{Prelaminaries}

We begin this section with some preliminaries which are needed in the proof of main results of the paper. The following remark is some elementary properties of the class of minimax $R$-modules which we shall use.

\begin{rem}\label{rem 2.2}
	The following statement holds:
	\begin{itemize}
		\item[(i)] The class of minimax modules contains all finitely generated and all Artinian modules;
		
		\item[(ii)] Let $0 \longrightarrow L \longrightarrow M \longrightarrow N \longrightarrow 0 $ be an exact sequence of 
		$R$-modules. Then $M$ is minimax if and  only if $L$ and $N$ are both minimax {\rm(}see \cite[Lemma 2.1]{BN2}{\rm)}. Thus any 
		submodule and quotient of a minimax module is minimax;
		
		\item[(iii)] The set of associated primes of any minimax $R$-module is finite;
		
		\item[(iv)] Every zero-dimensional minimax $R$-module is Artinian;
		
		\item[(v)] If $M$ is a minimax $R$-module and $\fp$ is a non-maximal prime ideal of $R$, then $M_{\fp}$ is a finitely
		generated $R_{\fp}$-module.
	\end{itemize}
\end{rem}

The following well-known lemma which is true even for an arbitrary Serre subcategory is needed frequently in this paper.

\begin{lem}\label{ser}
	If $M$ is an   $R$-module such that $(0 :_{M} \fa)$   is minimax, then so is $(0 :_{M} \fa^{n})$  for each $n$.
\end{lem}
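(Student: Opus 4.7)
The plan is a straightforward induction on $n$, with the base case $n=1$ being precisely the hypothesis. The main idea for the inductive step is to compare the successive kernels $(0:_M\fa^n)\subseteq(0:_M\fa^{n+1})$ by exploiting the short exact sequence
\[
0\lo (0:_M\fa^n)\lo(0:_M\fa^{n+1})\lo Q\lo 0,
\]
where $Q:=(0:_M\fa^{n+1})/(0:_M\fa^n)$. By Remark \ref{rem 2.2}(ii) the class of minimax modules is closed under extensions, so once the inductive hypothesis gives that the left-hand term is minimax, it will suffice to show that $Q$ is minimax.

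The key trick is to embed $Q$ into a finite power of $(0:_M\fa)$. Since $R$ is Noetherian, pick generators $b_1,\dots,b_s$ of the finitely generated ideal $\fa^n$. I would then define
\[
\varphi\colon Q\lo (0:_M\fa)^s,\qquad \overline{x}\longmapsto (b_1x,\dots,b_sx).
\]
Two routine checks are needed: first, each $b_ix$ lies in $(0:_M\fa)$ because for every $a\in\fa$ we have $ab_i\in\fa^{n+1}$ and $x\in(0:_M\fa^{n+1})$, so $a(b_ix)=0$; second, the map is well-defined and injective, since $\varphi(\overline{x})=0$ forces $b_ix=0$ for all $i$, i.e.\ $\fa^n x=0$, i.e.\ $x\in(0:_M\fa^n)$, so $\overline{x}=0$ in $Q$.

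Once the injection $Q\hookrightarrow(0:_M\fa)^s$ is in hand, everything falls out from Remark \ref{rem 2.2}(ii): iterated application of the extension property shows $(0:_M\fa)^s$ is minimax, and then $Q$ is minimax as a submodule of a minimax module. Combining with the inductive hypothesis in the displayed short exact sequence above gives that $(0:_M\fa^{n+1})$ is minimax, completing the induction.

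There is no real obstacle here; the only thing to be careful about is choosing the correct ideal whose generators produce the embedding (generators of $\fa^n$, not of $\fa$), and verifying that the image indeed lands in $(0:_M\fa)$ rather than merely in $M$. The argument relies only on the Noetherian hypothesis (to ensure $\fa^n$ is finitely generated) and the basic closure properties of the class of minimax modules, so the same scheme would work for any Serre subcategory of $R$-Mod, which is the setting the subsequent lemma anticipates.
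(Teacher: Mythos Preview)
Your proof is correct and follows essentially the same strategy as the paper: induction on $n$ together with a multiplication-by-generators map whose kernel and image are controlled by earlier cases, then an appeal to the Serre property of minimax modules. The one cosmetic difference is that the paper uses generators $a_1,\dots,a_s$ of $\fa$ (not of $\fa^n$) to obtain the exact sequence $0\to(0:_M\fa)\to(0:_M\fa^n)\to\bigoplus_i a_i(0:_M\fa^n)$ with each $a_i(0:_M\fa^n)\subseteq(0:_M\fa^{n-1})$, so your cautionary remark that one must use generators of $\fa^n$ rather than of $\fa$ is not quite right---both choices work, they just swap which term is handled by the base case and which by the inductive hypothesis.
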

\begin{proof}
	For each $n \geq 2$, there exists an exact sequence of modules
	\begin{center}
		$0 \rightarrow (0 :_{M} \fa) \rightarrow  (0 :_{M} \fa^{n}) \overset{f}\rightarrow  a_{1}(0 :_{M} \fa^{n}) \oplus ...\oplus a_{s}(0 :_{M} \fa^{n})$
	\end{center}
	where $\fa = (a_{1}, a_{2}, ... , a_{s})$ and $f(x) = (a_{1}x, ... , a_{s}x)$. Since $a_{i}(0 :_{M} \fa^{n})$ is a submodule of the
	minimax module $(0 :_{M} \fa^{n-1})$ for each $i$, the result is obtained by an easy induction on $n$.
\end{proof}

\begin{lem}\label{c}
Suppose $x\in \fa$ and $\Supp_R(M)\subseteq \V(\fa)$. If $0:_{M}x$ and $M/xM$ are both $\fa$-cominimax, then $M$ must also be $\fa$-cominimax.
\end{lem}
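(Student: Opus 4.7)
The plan is to reduce the claim $\Ext^{i}_{R}(R/\fa,M)$ is minimax for all $i$ to the assumed minimaxness of $\Ext^{i}_{R}(R/\fa,0:_{M}x)$ and $\Ext^{i}_{R}(R/\fa,M/xM)$ by sandwiching $\Ext^{i}_{R}(R/\fa,M)$ between two minimax modules. The support hypothesis $\Supp_{R}(M)\subseteq \V(\fa)$ is inherited for free, so the issue is purely the minimaxness of the $\Ext$ groups.

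Concretely, I would split the four-term sequence arising from multiplication by $x$ on $M$ into the two short exact sequences
\[
0 \lo 0:_{M}x \lo M \xrightarrow{x} xM \lo 0 \qquad (\ast)
\]
and
\[
0 \lo xM \lo M \lo M/xM \lo 0. \qquad (\ast\ast)
\]
Applying $\Hom_{R}(R/\fa,-)$ to $(\ast)$ and $(\ast\ast)$ yields two long exact sequences; denote by $\phi_{i}\colon \Ext^{i}_{R}(R/\fa,M)\to \Ext^{i}_{R}(R/\fa,xM)$ and $\psi_{i}\colon \Ext^{i}_{R}(R/\fa,xM)\to \Ext^{i}_{R}(R/\fa,M)$ the connecting/induced maps from $(\ast)$ and $(\ast\ast)$, respectively.

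The key observation, which drives the whole argument, is that $\psi_{i}\circ \phi_{i}$ is induced by the composite $M\xrightarrow{x}xM\hookrightarrow M$, i.e.\ multiplication by $x$ on $\Ext^{i}_{R}(R/\fa,M)$; since $x\in\fa$ annihilates $R/\fa$, this composite is the zero map. Consequently $\Image(\phi_{i})\subseteq \Ker(\psi_{i})$. From the long exact sequence of $(\ast\ast)$, $\Ker(\psi_{i})$ is a quotient of $\Ext^{i-1}_{R}(R/\fa,M/xM)$, which is minimax by hypothesis; by Remark \ref{rem 2.2}(ii) (submodules and quotients of minimax are minimax), $\Image(\phi_{i})$ is minimax. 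Similarly, from the long exact sequence of $(\ast)$, $\Ker(\phi_{i})$ is a quotient of $\Ext^{i}_{R}(R/\fa,0:_{M}x)$, which is minimax. The short exact sequence
\[
0\lo \Ker(\phi_{i})\lo \Ext^{i}_{R}(R/\fa,M)\lo \Image(\phi_{i})\lo 0
\]
together with Remark \ref{rem 2.2}(ii) then forces $\Ext^{i}_{R}(R/\fa,M)$ to be minimax.

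The only conceptual point that requires care is the vanishing of the composition $\psi_{i}\phi_{i}$; everything else is a routine diagram chase in two long exact sequences. I expect no genuine obstacle, since the ``$x$ acts as zero on $\Ext^{i}_{R}(R/\fa,-)$'' principle is exactly the mechanism that lets the two halves of the multiplication-by-$x$ sequence talk to each other and close up the argument.
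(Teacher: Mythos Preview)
Your argument is correct and is precisely the approach the paper intends: it simply refers to \cite[Corollary 3.4]{Mel}, whose proof is exactly the two-short-exact-sequence sandwich using the vanishing of $\psi_i\circ\phi_i$ (multiplication by $x\in\fa$ on $\Ext^i_R(R/\fa,M)$) that you have written out, with minimaxness replacing finite generation throughout.
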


\begin{proof}
The proof is similar to the proof of \cite[Corollary 3.4]{Mel}.	
\end{proof}



\begin{lem} \label{1}
	Let $X$ be a finite $R$--module, $M$ be an arbitrary $R$--module. Then the following statements hold true.
	\begin{itemize}
		\item[(a)] $\Gamma_{\fa}(X, M)\cong \Hom_{R}(X, \Gamma_\fa (M)).$
		\item[(b)] If $\Supp_R(X)\cap \Supp_R(M)\subseteq \V(\fa)$, then $\lc^{i}_{\fa}(X, M)\cong\Ext^i_{R}(X, M)$ for all $i$.
	\end{itemize}
\end{lem}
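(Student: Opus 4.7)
For part (a), the plan is to unwind the definition of generalized local cohomology in degree zero and exploit finite presentation of $X$. By definition $\Gamma_\fa(X,M)=\varinjlim_n \Hom_R(X/\fa^n X, M)$. The universal property of the quotient produces a natural isomorphism
\[\Hom_R(X/\fa^n X, M)\;\cong\;\Hom_R\bigl(X,(0:_M \fa^n)\bigr),\]
since an $R$-map $X/\fa^n X\to M$ is the same as an $R$-map $X\to M$ whose image is annihilated by $\fa^n$. As $X$ is finitely generated over the Noetherian ring $R$ it is finitely presented, so $\Hom_R(X,-)$ commutes with the filtered direct limit. Hence
\[\Gamma_\fa(X,M)\;\cong\;\varinjlim_n\Hom_R\bigl(X,(0:_M\fa^n)\bigr)\;\cong\;\Hom_R\bigl(X,\varinjlim_n(0:_M\fa^n)\bigr)\;=\;\Hom_R(X,\Gamma_\fa(M)).\]

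For part (b), the plan is to compute both sides via a minimal injective resolution $E^\bullet$ of $M$. Standard properties of generalized local cohomology (the agreement of the direct-limit definition with the right-derived-functor definition of $\Gamma_\fa(X,-)$) give $\lc^i_\fa(X,M)=H^i(\Gamma_\fa(X,E^\bullet))$, and by part (a) the complex $\Gamma_\fa(X,E^\bullet)$ equals $\Hom_R(X,\Gamma_\fa(E^\bullet))$; simultaneously $\Ext^i_R(X,M)=H^i(\Hom_R(X,E^\bullet))$. Therefore it suffices to show that the natural inclusion of complexes
\[\Hom_R\bigl(X,\Gamma_\fa(E^\bullet)\bigr)\;\hookrightarrow\;\Hom_R\bigl(X,E^\bullet\bigr)\]
induced by $\Gamma_\fa(E^\bullet)\hookrightarrow E^\bullet$ is an isomorphism in each degree.

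The central point is the Matlis decomposition $E^i=\bigoplus_{\fp\in\Spec R}E(R/\fp)^{\mu_i(\fp,M)}$, in which the Bass numbers $\mu_i(\fp,M)$ vanish off $\Supp_R(M)$. Finite generation of $X$ lets $\Hom_R(X,-)$ pass through the direct sum, and because $E(R/\fp)$ carries a natural $R_\fp$-module structure one has $\Hom_R(X,E(R/\fp))\cong\Hom_{R_\fp}(X_\fp,E(R/\fp))$, which vanishes whenever $\fp\notin\Supp_R(X)$. Consequently the only primes contributing to $\Hom_R(X,E^i)$ are those in $\Supp_R(X)\cap\Supp_R(M)$, which by hypothesis lie in $\V(\fa)$; for each such $\fp$ the summand $E(R/\fp)$ is $\fa$-torsion and already sits inside $\Gamma_\fa(E^i)$. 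This gives the desired isomorphism of complexes, and passing to cohomology yields $\lc^i_\fa(X,M)\cong\Ext^i_R(X,M)$. The main technical point to pin down is the identification $\lc^i_\fa(X,M)=H^i(\Gamma_\fa(X,E^\bullet))$ starting from the direct-limit definition in the excerpt; this amounts to commuting $H^i$ with a filtered direct limit together with the vanishing of $\Ext^{>0}_R(X/\fa^n X, E^j)$, and it is standard for generalized local cohomology (appropriate references in Herzog's original paper suffice).
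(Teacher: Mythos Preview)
Your proof is correct. The paper does not supply its own argument for this lemma and simply cites \cite[Lemma~2.5]{VA}, so there is nothing substantive to compare; your direct approach---unwinding the direct-limit definition for (a), and for (b) using the Matlis decomposition of a minimal injective resolution together with the support hypothesis to identify $\Hom_R(X,\Gamma_\fa(E^\bullet))$ with $\Hom_R(X,E^\bullet)$---is the standard one and is in all likelihood what the cited reference contains.
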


\begin{proof}
See \cite[Lemma 2.5]{VA}.
\end{proof}

\begin{lem}\label{anv}
Let $R$ be a Noetherian  ring  with $\dim R\leq 2$. Let
$\fa$ be an ideal of $R$ and  $M$ a non-zero minimax
$R$-module. Then  $\lc^{i}_\fa(M)$ is $\fa$-cominimax for
all $ i \geq 0$.	
\end{lem}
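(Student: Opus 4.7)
My plan is to exploit the defining short exact sequence of the minimax module,
\[
0 \lo F \lo M \lo A \lo 0,
\]
with $F$ finitely generated and $A$ Artinian, analyze the local cohomology of each end separately via known results, and then glue the outputs together through the long exact sequence.

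For the finitely generated end, I would invoke Melkersson's theorem (\cite{Mel}) that when $\dim R\leq 2$ every $\lc^i_\fa(F)$ with $F$ finitely generated is $\fa$-cofinite, hence $\fa$-cominimax. For the Artinian end, I would prove the stronger statement that $\lc^i_\fa(A)=0$ for all $i\geq 1$ and that $\lc^0_\fa(A)=\Gamma_\fa(A)$ is Artinian. Since $\Supp(A)\subseteq\Max R$ is finite, decompose $A=\bigoplus_\fm \Gamma_\fm(A)$ into its $\fm$-primary parts. If $\fa\subseteq\fm$ the summand is $\fa$-torsion, so its higher $\lc^i_\fa$ vanishes. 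If $\fa\not\subseteq\fm$, choose $x\in\fa\setminus\fm$; as $x$ is a unit modulo every power of $\fm$, it acts as an automorphism on the $\fm$-torsion module $\Gamma_\fm(A)$, and by functoriality on each $\lc^i_\fa(\Gamma_\fm(A))$. But $\lc^i_\fa(\Gamma_\fm(A))$ is $\fa$-torsion, so $x$ also acts locally nilpotently on it; an automorphism that is locally nilpotent forces the module to be zero.

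With the two ends controlled, the long exact sequence
\[
\cdots \lo \lc^{i-1}_\fa(A) \lo \lc^i_\fa(F) \lo \lc^i_\fa(M) \lo \lc^i_\fa(A) \lo \cdots
\]
gives the result. For $i=0$, simply note that $\lc^0_\fa(M)=\Gamma_\fa(M)$ is a submodule of the minimax module $M$, hence minimax, and a minimax module with support in $\V(\fa)$ is automatically $\fa$-cominimax (apply $\Hom_R(-,\Gamma_\fa(M))$ to a finite free resolution of $R/\fa$ and observe that $\Ext^j_R(R/\fa,\Gamma_\fa(M))$ is a subquotient of a finite direct sum of copies of $\Gamma_\fa(M)$). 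For $i\geq 1$, the vanishing of $\lc^i_\fa(A)$ makes $\lc^i_\fa(F)\to\lc^i_\fa(M)$ surjective, so $\lc^i_\fa(M)$ is a quotient of the $\fa$-cofinite module $\lc^i_\fa(F)$; Melkersson's theorem that the $\fa$-cofinite modules form an Abelian subcategory when $\dim R\leq 2$ (\cite[Theorem~7.4]{Mel} or \cite[Theorem~2.6]{Mel1}) then forces $\lc^i_\fa(M)$ to be $\fa$-cofinite, hence $\fa$-cominimax.

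The main obstacle is the Artinian vanishing statement, specifically combining the automorphism and local-nilpotence properties on $\lc^i_\fa(\Gamma_\fm(A))$; once that is in hand, everything else is routine long-exact-sequence bookkeeping together with correctly invoking the existing dimension-two cofiniteness and Abelian-subcategory results of Melkersson.
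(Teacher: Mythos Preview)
Your decomposition approach via $0\to F\to M\to A\to 0$ is sound and genuinely different from the paper's (which handles $i=0$ and $i=2$ directly and then cites \cite[Corollary~3.9]{ANV} for the remaining case $i=1$), but the final gluing step has a real gap. You claim that because $\lc^i_\fa(M)$ is a quotient of the $\fa$-cofinite module $\lc^i_\fa(F)$, Melkersson's Abelian-subcategory theorem forces it to be $\fa$-cofinite. This misreads the Abelian property: the subcategory $\mathscr{C}(R,\fa)_{cof}$ is closed under kernels and cokernels of morphisms \emph{between objects already known to lie in it}, not under arbitrary quotients taken in the ambient module category. For $i=1$ the kernel of the surjection $\lc^1_\fa(F)\twoheadrightarrow\lc^1_\fa(M)$ is the image of $\Gamma_\fa(A)$, an Artinian module; but an Artinian module supported on $\V(\fa)$ need not be $\fa$-cofinite when $\dim R=2$ (take $R=k[[x,y]]$, $\fa=(x)$, and $E_R(k)$: then $(0:_{E}x)$ is not finitely generated). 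So you cannot place that kernel inside $\mathscr{C}(R,\fa)_{cof}$ and invoke Abelianness.

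The repair is easy and in fact bypasses the Abelian theorem entirely. For $i\geq 2$ your long exact sequence already gives an isomorphism $\lc^i_\fa(F)\cong\lc^i_\fa(M)$, since $\lc^{i-1}_\fa(A)=0$ as well, so there is nothing to do. For $i=1$, note that the kernel $K$ is Artinian and hence $\fa$-\emph{cominimax}: each $\Ext^j_R(R/\fa,K)$ is a subquotient of a finite direct sum of copies of $K$, hence Artinian, hence minimax. Then from $0\to K\to\lc^1_\fa(F)\to\lc^1_\fa(M)\to 0$, the long exact $\Ext_R(R/\fa,-)$-sequence together with the Serre property of minimax modules (Remark~\ref{rem 2.2}(ii)) gives that $\lc^1_\fa(M)$ is $\fa$-cominimax, which is exactly the desired conclusion; you do not obtain, and do not need, $\fa$-cofiniteness.
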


\begin{proof}
It follows from  Grothendieck’s Vanishing Theorem \cite[Theorem 6.1.2]{BSh} that $\lc^{i}_\fa(M)=0$ for all $i\geq 3$. Also $\lc^{2}_\fa(M)$ is Artinian and $\lc^{0}_\fa(M)$ is minimax. So both of them are $\fa$-cominimax. Now the assertion follows form \cite[Corollary 3.9]{ANV}.  
\end{proof}


\section{An Abelian category  of cominimax modules}

\begin{defn}
	Let $\fa$ be an ideal of $R$, $n$ be a non-negative integer and $M$ an $R$-module. We say that $R$ admits $P^{\prime}_{n}(\fa)$
	if for any  $R$-module $M$, the following implication holds:
	
	If $Ext_{R}^{i}(R/\fa , M)$ is minimax for all $i \leq n$ and $\Supp (M) \subseteq \V(\fa)$,\\
	then $M$ is $\fa$-cominimax.
	\end{defn}

We now present the first main theorem of this section which is generalization of \cite[Theorem 2.3]{NS}.

\begin{thm}\label{2.2}
	Let $R$ be a Noetherian ring of dimension $d \geq 1$ admitting the condition $P^{\prime}_{d-1}(\fa)$ for all ideals $\fa$ of dimension $\leq d-1$  (i.e. $\dim R/ \fa \leq d-1$), then $R$ admits the condition $P^{\prime}_{d-1}(\fa)$ for all ideals $\fa$ of $R$.
\end{thm}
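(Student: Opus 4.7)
The case $\dim R/\fa\leq d-1$ is immediate from the hypothesis applied to $\fa$ itself, so assume $\dim R/\fa=d$. Fix an $R$-module $M$ with $\Supp(M)\subseteq\V(\fa)$ and $\Ext^i_R(R/\fa,M)$ minimax for all $i\leq d-1$; the goal is to show $M$ is $\fa$-cominimax.

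Since $M$ is $\fa$-torsion, $\Ass_R(M)=\Ass_R(0:_M\fa)$, and because $(0:_M\fa)=\Hom_R(R/\fa,M)$ is minimax by the $i=0$ hypothesis, Remark \ref{rem 2.2}(iii) gives that $\Ass_R(M)$ is finite. Let $T:=\{\fp\in\Ass_R(M):\dim R/\fp=d\}$; this is a finite set of minimal primes of $\fa$. By prime avoidance pick $x\in R\setminus\bigcup_{\fp\in T}\fp$ and set $\fb:=\fa+Rx$; then $\dim R/\fb\leq d-1$, so the hypothesis $P'_{d-1}(\fb)$ becomes available.

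Set $N:=\bigcup_{n\geq 1}(0:_M x^n)$. Then $\Ass_R(N)\subseteq\Ass_R(M)\setminus T$ contains no coheight-$d$ primes, so $\dim N\leq d-1$, while $x$ acts as a nonzerodivisor on $M/N$. Both $N$ and $M/xM$ have support in $\V(\fa)\cap\V(x)=\V(\fb)$, and by Lemma \ref{ser} the modules $(0:_M\fa^n)$ remain minimax for every $n$. The technical core is to transfer the minimax-Ext data from $(R/\fa,M)$ to $(R/\fb,N)$ and $(R/\fb,M/xM)$ using the long exact Ext sequences attached to $0\to N\to M\to M/N\to 0$, to $0\to(0:_M x)\to M\to xM\to 0$ and $0\to xM\to M\to M/xM\to 0$, and to the change-of-ideal sequence $0\to\fb/\fa\to R/\fa\to R/\fb\to 0$ (in which $\fb/\fa$ is cyclic over $R/\fa$ and admits a prime filtration by modules of the form $R/\fq$ with $\fq\supseteq\fa$). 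Granting minimaxness of $\Ext^i_R(R/\fb,N)$ and $\Ext^i_R(R/\fb,M/xM)$ for $i\leq d-1$, apply $P'_{d-1}(\fb)$ to conclude that $N$ and $M/xM$ are $\fb$-cominimax, upgrade these to $\fa$-cominimaxness via the same filtration trick, and combine using closure of the $\fa$-cominimax class under short exact sequences (together with a Lemma \ref{c}-style argument) to deduce that $M$ itself is $\fa$-cominimax.

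The main obstacle will be the Ext-transfer step: threading minimaxness through the several long exact sequences above, and verifying at each stage that the needed $\Ext^i$'s in degrees $\leq d-1$ are genuinely available from the given hypothesis on $(R/\fa,M)$ before invoking $P'_{d-1}(\fb)$---that is, avoiding any circular appeal to cominimaxness in degrees $\geq d$ which have not yet been established.
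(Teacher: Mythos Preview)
Your plan diverges substantially from the paper's argument, and the route you have chosen runs into a genuine obstruction at exactly the point you flag as ``the main obstacle.''

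The paper does not pick an auxiliary element $x$ or decompose $M$ along an $x$-torsion filtration at all. Instead it passes to $\overline R=R/\Gamma_{\fa}(R)$ and $\overline M=M/(0:_{M}\fa^{n})$ for $n$ with $(0:_{R}\fa^{n})=\Gamma_{\fa}(R)$. In $\overline R$ the ideal $\overline\fa$ contains a regular element, so $\dim R/(\fa+\Gamma_{\fa}(R))\le d-1$, and the hypothesis $P'_{d-1}(\fa+\Gamma_{\fa}(R))$ applies directly to $\overline M$. The Ext-transfer is then painless: $(0:_{M}\fa^{n})$ is outright minimax by Lemma~\ref{ser}, so $\Ext^{i}_{R}(R/(\fa+\Gamma_{\fa}(R)),\overline M)$ is minimax for $i\le d-1$ straight from the long exact sequence, with no loss of a degree. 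A change-of-ring principle then converts $(\fa+\Gamma_{\fa}(R))$-cominimaxness of $\overline M$ into $\fa$-cominimaxness, and one finishes with the extension $0\to(0:_{M}\fa^{n})\to M\to\overline M\to 0$.

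By contrast, your scheme has three real problems. First (minor), avoiding only the coheight-$d$ primes in $\Ass_{R}(M)$ does not force $\dim R/\fb\le d-1$; you must also avoid the coheight-$d$ minimal primes of $\fa$ that happen not to lie in $\Supp(M)$. Second (serious), the transfer to $\Ext^{d-1}_{R}(R/\fb,M/xM)$ genuinely needs degree-$d$ input: the long exact sequence of $0\to xM\to M\to M/xM\to 0$ feeds $\Ext^{d-1}(R/\fb,M/xM)$ from $\Ext^{d}(R/\fb,xM)$, and the vanishing-of-$x$ trick on $\Ext^{\bullet}(R/\fb,-)$ does not by itself supply this. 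Third, the combination step is not available as stated: Lemma~\ref{c} requires $x\in\fa$, whereas your $x$ is chosen outside certain primes containing $\fa$ and need not lie in $\fa$; and the ``upgrade'' from $\fb$-cominimax to $\fa$-cominimax is the wrong direction for the support lemma (from $\fa$-data one deduces $\fb$-data since $\Supp(R/\fb)\subseteq\V(\fa)$, not conversely). The paper sidesteps all of this because $\fa$ and $\fa+\Gamma_{\fa}(R)$ define the same ideal of $\overline R$, so the change-of-ring principle gives the conversion for free.
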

\begin{proof}
Assume that $M$ is an   $R$-module and $\fa$ is an arbitrary ideal such that $\Supp (M) \subseteq \V(\fa)$ and $Ext_{R}^{i}(R/\fa , M)$
is minimax for all $i \leq d-1$. We show that $M$ is $\fa$-cominimax. In the case where $\fa$ is nilpotent, say $\fa^{n}=0$
for some integer $n$, we have $M= (0 :_{M} \fa^{n})$. Now, since $ (0 :_{M} \fa)$ is minimax, Lemma \ref{ser} implies that
$M$ is  minimax and so $\V(\fa) = \Spec R$ forces that  $M $ is $\fa$-cominimax. Now suppose that $\fa$ is not nilpotent. In 
this case, we can choose a positive integer $n$ such that $(0 :_{R} \fa^{n}) = \Gamma_{\fa}(R)$. Put $\overline{R} = R/\Gamma_{\fa}(R)$   and $\overline{M} = M/(0 :_{M} \fa^{n})$ which is an $\overline{R}$-module. Taking $\overline{\fa}$ as an image of $\fa$ in 
$\overline{R}$, we have $\Gamma_{\overline{\fa}}(\overline{R}) = 0$. Thus  $\overline{\fa}$ contains an $\overline{R}$-regular element
so that $\dim R/\fa + \Gamma_{\fa}(R) = \dim \overline{R}/ \overline{\fa} \leq d-1$. The assumption on $M$ together with the fact that
$\Supp_{R}(R/ \fa + \Gamma_{\fa}(R)) \subset \Supp_{R}(R/ \fa )$ and \cite[Lemma 2.1]{A2} imply that
$\Ext_{R}^{i}(R/ \fa + \Gamma_{\fa}(R) , M)$ is minimax for all $i \leq d-1$. In view of Lemma \ref{ser}, the module 
$(0 :_{M} \fa^{n})$ is minimax and thus $\Ext_{R}^{i}(R/ \fa + \Gamma_{\fa}(R) , \overline{M})$ is minimax for all
$i \leq d-1$. On the other hand, it is clear that $\Supp(\overline{M}) \subseteq \V(\fa + \Gamma_{\fa}(R))$ and since by the assumption 
$R$ admits $P^{\prime}_{d-1}(\fa + \Gamma_{\fa}(R))$, the module $\overline{M}$ is $\fa + \Gamma_{\fa}(R)$-cominimax. Now using 
the change of ring principle \cite[Theorem 4.8]{ATV}, the module $M$ is an $\fa$-cominimax $R$-module; and finally the minimaxness of $(0 :_{M} \fa^{n})$ forces that $M$ is an $\fa$-cominimax $R$-module.
\end{proof}

\begin{cor}\label{cor 2.4}
	Let $R$ be a Noetherian ring of dimension $2$. Then $R$ admits the condition $P^{\prime}_{1}(\fa)$ for all ideals $\fa$ of $R$.
\end{cor}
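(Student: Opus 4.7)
The strategy is to apply Theorem~\ref{2.2} with $d=2$. That theorem reduces the assertion $P'_1(\fa)$ for all ideals of $R$ to $P'_1(\fa)$ for only those ideals satisfying $\dim R/\fa \leq 1$, so the corollary hinges on verifying this restricted hypothesis.

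Concretely, I would fix an ideal $\fa$ with $\dim R/\fa \leq 1$ and an $R$-module $M$ with $\Supp_R(M) \subseteq \V(\fa)$ for which $\Hom_R(R/\fa,M)$ and $\Ext^1_R(R/\fa,M)$ are minimax, and argue that $\Ext^i_R(R/\fa,M)$ is minimax for every $i$. Since $\Supp_R(M) \subseteq \V(\fa)$ and $R$ is Noetherian, the module $M$ is automatically $\fa$-torsion. One now invokes the cominimax analog of Bahmanpour--Naghipour's characterization of $\fa$-cofinite modules in dimension one: when $\dim R/\fa \leq 1$, an $\fa$-torsion module is $\fa$-cominimax exactly when its first two $\Ext$ modules against $R/\fa$ are minimax. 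This converts the potentially infinite list of conditions defining $\fa$-cominimaxness into only the two that are hypothesized, and hence $M$ is $\fa$-cominimax as required.

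The main obstacle is this cominimax-in-dimension-one criterion itself. Its proof typically combines a passage to $\overline{R} = R/\Gamma_\fa(R)$ (the same device used in the proof of Theorem~\ref{2.2}) together with Lemma~\ref{ser} to reduce to the case when $\fa$ admits an $R$-regular element; an analysis via Lemma~\ref{c} applied to such a regular element then propagates minimaxness of $\Ext^i_R(R/\fa,-)$ down to a zero-dimensional situation, where minimax modules are Artinian by Remark~\ref{rem 2.2}(iv) and the Melkersson property of the Artinian category closes the induction. The subtlety in this last step is that one cannot in general choose $x \in \fa$ forcing $\dim R/(\fa+xR)=0$ (since $\fa$ lies inside every minimal prime of $R/\fa$), and so the zero-dimensional reduction must be carried out after a local analysis at the finitely many non-maximal primes in $\Min(R/\fa)$ of dimension one. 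Once this dimension-one criterion is in place, Theorem~\ref{2.2} delivers the corollary immediately.
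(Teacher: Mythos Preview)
Your proposal is correct and follows the same route as the paper: reduce via Theorem~\ref{2.2} with $d=2$ to ideals with $\dim R/\fa \leq 1$, and then invoke the dimension-one cominimaxness criterion, which the paper cites directly as \cite[Lemma 2.4]{KA}. Your additional sketch of how that external lemma is established is extra detail beyond what the paper provides, but the overall architecture of the argument is identical.
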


\begin{proof}
	It follows from Lemma \cite[Lemma 2.4]{KA} that $R$ admits the condition $P^{\prime}_{1}(\fa)$ for all ideals $\fa$ with $\dim R/\fa =1$. Thus the result follows by Theorem \ref{2.2}.
\end{proof}
\begin{cor}\label{cor 4.4}
	Let $(R,\fm)$ be a Noetherian local ring of dimension $3$. Then $R$ admits the condition $P^{\prime}_{2}(\fa)$ for all ideals $\fa$ of $R$.
\end{cor}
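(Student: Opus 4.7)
The plan is to apply Theorem \ref{2.2} with $d=3$, which reduces the problem to verifying that $R$ admits $P'_2(\fa)$ for every ideal $\fa$ with $\dim R/\fa\leq 2$. Fix such an $\fa$ and an $R$-module $M$ satisfying $\Supp_R(M)\subseteq\V(\fa)$ and $\Ext^i_R(R/\fa,M)$ minimax for $i\leq 2$; the goal is to show $M$ is $\fa$-cominimax.

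If $\fa$ is nilpotent, Lemma \ref{ser} immediately gives that $M$ is minimax, hence $\fa$-cominimax. Otherwise, mimic the reduction from the proof of Theorem \ref{2.2}: pick $n$ with $(0:_R\fa^n)=\Gamma_\fa(R)$ and pass to $\bar R=R/\Gamma_\fa(R)$, $\bar{\fa}=(\fa+\Gamma_\fa(R))/\Gamma_\fa(R)$, and $\bar M=M/(0:_M\fa^n)$. Then $(0:_M\fa^n)$ is minimax by Lemma \ref{ser}, $\Ext^i_{\bar R}(\bar R/\bar{\fa},\bar M)$ is minimax for $i\leq 2$, and $\bar{\fa}$ contains a $\bar R$-regular element $\bar x$. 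By the change-of-ring principle \cite[Theorem~4.8]{ATV} applied to $0\to(0:_M\fa^n)\to M\to\bar M\to 0$, the problem reduces to showing that $\bar M$ is $\bar{\fa}$-cominimax as an $\bar R$-module.

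The new ingredient is to drop dimension by killing $\bar x$. Since $\bar R$ is local of dimension at most $3$ and $\bar x$ is $\bar R$-regular, the ring $S:=\bar R/\bar x\bar R$ has Krull dimension at most $2$, so by Corollary \ref{cor 2.4}, $S$ admits $P'_1$ with respect to every ideal. By Lemma \ref{c} applied over $\bar R$, it suffices to show that $N_1:=0:_{\bar M}\bar x$ and $N_2:=\bar M/\bar x\bar M$ are $\bar{\fa}$-cominimax over $\bar R$. Since $N_1,N_2$ are annihilated by $\bar x$ and hence are $S$-modules, a further application of \cite[Theorem~4.8]{ATV} converts this to the requirement that they be $\bar{\fa}S$-cominimax over $S$, for which, by the $P'_1$ just recalled, it is enough to check that $\Ext^i_S(S/\bar{\fa}S,N_j)$ is minimax for $i\leq 1$ and $j=1,2$.

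The final step is the Ext-bookkeeping, which I expect to be the principal obstacle. One transfers the hypothesis that $\Ext^i_{\bar R}(\bar R/\bar{\fa},\bar M)$ is minimax for $i\leq 2$ to minimaxness of $\Ext^i_S(S/\bar{\fa}S,N_j)$ for $i\leq 1$ by combining the long exact sequences of $\Ext^\bullet_{\bar R}(\bar R/\bar{\fa},-)$ attached to
\begin{equation*}
0\to N_1\to\bar M\xrightarrow{\bar x}\bar x\bar M\to 0,\qquad 0\to\bar x\bar M\to\bar M\to N_2\to 0,
\end{equation*}
with the standard change-of-rings spectral sequence $\Ext^p_S(S/\bar{\fa}S,\Ext^q_{\bar R}(S,N_j))\Rightarrow\Ext^{p+q}_{\bar R}(\bar R/\bar{\fa},N_j)$, which is concentrated in rows $q=0,1$ because $\bar x$ is $\bar R$-regular and $N_j$ is $\bar x$-torsion. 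The delicate point is that the connecting differentials of the four-term spectral sequence associated with the complex $\bar M\xrightarrow{\bar x}\bar M$ entangle the $\Ext$-groups of $N_1$ and $N_2$, so that minimaxness for both modules in the range $i\leq 1$ must be established simultaneously; the fact that $S$ has dimension $\leq 2$ (so that $P'_1$, not $P'_2$, suffices) is exactly what makes the bookkeeping close.
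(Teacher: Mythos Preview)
The paper's proof is a two--line citation: it invokes \cite[Theorem~2.6]{KA}, which already establishes $P'_2(\fa)$ for every ideal with $\dim R/\fa\le 2$ over a local ring, and then applies Theorem~\ref{2.2} with $d=3$. Your route is genuinely different: after the same reduction via Theorem~\ref{2.2}, you do not appeal to \cite{KA} but instead redo that step internally, descending along a regular element $\bar x\in\bar\fa$ to the ring $S=\bar R/\bar x\bar R$ of dimension $\le 2$, where Corollary~\ref{cor 2.4} supplies $P'_1$, and then lifting back with Lemma~\ref{c}. This buys self--containment at the price of a longer argument.

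Your outline is sound, but the ``Ext--bookkeeping'' you flag as the main obstacle is left unfinished, and the suggestion that $N_1$ and $N_2$ must be treated \emph{simultaneously} is misleading. In fact the argument closes sequentially. First, since $\bar x\in\bar\fa$ one has $\Hom_{\bar R}(\bar R/\bar\fa,N_1)=\Hom_{\bar R}(\bar R/\bar\fa,\bar M)$, and from $0\to N_1\to\bar M\to\bar x\bar M\to 0$ together with $\Hom_{\bar R}(\bar R/\bar\fa,\bar x\bar M)\subseteq\Hom_{\bar R}(\bar R/\bar\fa,\bar M)$ one gets $\Ext^i_{\bar R}(\bar R/\bar\fa,N_1)$ minimax for $i\le 1$. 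Your two--row spectral sequence (with $E_2^{p,0}=E_2^{p,1}=\Ext^p_S(S/\bar\fa S,N_1)$ because $\bar x N_1=0$) yields $\Ext^0_S=\Ext^0_{\bar R}$ and an injection $\Ext^1_S\hookrightarrow\Ext^1_{\bar R}$, so $\Ext^i_S(S/\bar\fa S,N_1)$ is minimax for $i\le 1$; by $P'_1$ over $S$ and change of rings, $N_1$ is $\bar\fa$--cominimax over $\bar R$, hence $\Ext^i_{\bar R}(\bar R/\bar\fa,N_1)$ is minimax for \emph{all} $i$. Now the two short exact sequences give $\Ext^i_{\bar R}(\bar R/\bar\fa,\bar x\bar M)$ minimax for $i\le 1$ (using $\Ext^2$ of $N_1$ and $\Ext^{\le 2}$ of $\bar M$), and then $\Ext^i_{\bar R}(\bar R/\bar\fa,N_2)$ minimax for $i\le 1$; the same spectral--sequence step and $P'_1$ on $S$ finish $N_2$, and Lemma~\ref{c} concludes. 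So your strategy works, but the paper's citation to \cite[Theorem~2.6]{KA} replaces all of this with one line.
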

\begin{proof}
	It follows from Theorem \cite[Theorem 2.6]{KA} that $R$ admits the condition $P^{\prime}_{2}(\fa)$ for all ideals $\fa$ with $\dim R/\fa =2$. Thus the result follows by Theorem \ref{2.2}.
\end{proof}




The following proposition shows that the class of minimax $R$-modules is a Melkersson subcategory of $R$-modules whenever $\dim R\leq 1$. Therefore, in this case, submodules and quotients modules of an $\fa$-cominimax modules are always $\fa$-cominimax.

\begin{prop}\label{melc}
	Let $R$ be a Noetherian ring with $\dim R\leq 1$. Let  $M$ be an  $R$-module such that $\Supp_{R}(M) \subseteq \V(\fa)$ and $(0 :_{M} \fa)$  is a minimax $R$-modules. Then $M$
	is minimax.  In this case the class of $\fa$-cominimax $R$-modules is a 
	Serre subcategory of $R$-modules.
\end{prop}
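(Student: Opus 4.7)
My plan is to prove the main (first) assertion by exhibiting $M$ as an extension of an Artinian module by a finitely generated one; the Serre-subcategory conclusion will then follow formally. From $\Supp_R(M)\subseteq \V(\fa)$ I first observe that $M=\G_{\fa}(M)=\bigcup_n(0:_M\fa^n)$, because every cyclic submodule is finitely generated with support in $\V(\fa)$ and is therefore annihilated by a power of $\fa$. Lemma~\ref{ser} then promotes the hypothesis to: $(0:_M\fa^n)$ is minimax for every $n$. Every $\fp\in\Ass_R(M)$ contains $\fa$, so $R/\fp$ embeds into $(0:_M\fa)$, forcing $\Ass_R(M)$ to be finite by Remark~\ref{rem 2.2}(iii). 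The dimension hypothesis makes every non-maximal prime minimal, so the set $\Lambda$ of non-maximal primes in $\Supp_R(M)$ is finite.

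The decomposition is the crux. For each $\fp\in\Lambda$, the ring $R_\fp$ is Artinian and $\fa R_\fp$ is nilpotent, so $M_\fp=(0:_M\fa^n)_\fp$ for sufficiently large $n$; combining with Remark~\ref{rem 2.2}(v) then gives $M_\fp$ finitely generated over $R_\fp$. I would choose, for every $\fp\in\Lambda$, finitely many elements of $M$ whose images in $M_\fp$ generate it over $R_\fp$, and let $F$ be the finitely generated submodule of $M$ they span across the finite set $\Lambda$. By construction $F_\fp=M_\fp$ for each $\fp\in\Lambda$, while for $\fp$ non-maximal and outside $\Lambda$ we have $M_\fp=0$; hence $(M/F)_\fp=0$ at every non-maximal $\fp$, so $\Supp_R(M/F)$ consists only of maximal ideals. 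Applying $\Hom_R(R/\fa,-)$ to $0\to F\to M\to M/F\to 0$ produces the exact sequence
\begin{equation*}
(0:_M\fa)\lo(0:_{M/F}\fa)\lo\Ext^1_R(R/\fa,F),
\end{equation*}
whose outer terms are minimax (the finite generation of $F$ and $R/\fa$ forces $\Ext^1_R(R/\fa,F)$ to be finitely generated), so $(0:_{M/F}\fa)$ is minimax by Remark~\ref{rem 2.2}(ii). Being supported only at maximal ideals, this module is zero-dimensional, hence Artinian by Remark~\ref{rem 2.2}(iv). Since $M/F$ is $\fa$-torsion, Melkersson's classical criterion (Artinian modules form a Melkersson subcategory with respect to any ideal) forces $M/F$ itself to be Artinian. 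Thus $M$ is an extension of an Artinian module by a finitely generated one, i.e., $M$ is minimax.

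For the Serre-subcategory statement, let $0\to L\to M\to N\to 0$ be an exact sequence of $R$-modules with supports in $\V(\fa)$. If $M$ is $\fa$-cominimax then $(0:_M\fa)=\Hom_R(R/\fa,M)$ is minimax, so by the first part $M$ itself is minimax; hence $L$ and $N$ are minimax (Remark~\ref{rem 2.2}(ii)). A resolution of $R/\fa$ by finitely generated free $R$-modules then exhibits every $\Ext^i_R(R/\fa,L)$ and $\Ext^i_R(R/\fa,N)$ as a subquotient of a finite direct sum of copies of the minimax modules $L$ and $N$, so both are minimax; this shows $L$ and $N$ are $\fa$-cominimax. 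Closure under extensions follows from the long exact sequence of $\Ext^{\ast}_R(R/\fa,-)$ applied to $0\to L\to M\to N\to 0$ together with Remark~\ref{rem 2.2}(ii). The principal obstacle throughout is the decomposition step: producing a single finitely generated $F\subseteq M$ that trivialises every localisation $M_\fp$ with $\fp$ non-maximal, and then checking that the cokernel $M/F$ retains enough minimaxness in its $\fa$-socle to trigger Melkersson's criterion.
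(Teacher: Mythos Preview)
Your proof is correct. Both your argument and the paper's conclude by exhibiting $M$ as an extension with Artinian top, invoking Remark~\ref{rem 2.2}(iv) and Melkersson's criterion at the decisive step, but the routes to that decomposition diverge. The paper passes to the quotient ring $\overline{R}=R/\Gamma_{\fa}(R)$: since $\overline{\fa}$ then contains an $\overline{R}$-regular element, $\dim\overline{R}/\overline{\fa}=0$, so $(0:_{\overline{M}}\overline{\fa})$ is a zero-dimensional minimax module and hence Artinian, whence $\overline{M}=M/(0:_M\fa^n)$ is Artinian by Melkersson; thus $M$ sits as an extension of the \emph{minimax} module $(0:_M\fa^n)$ by an Artinian one. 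You instead stay over $R$, exploit Remark~\ref{rem 2.2}(v) to see that each $M_{\fp}$ with $\fp$ non-maximal is finitely generated over $R_{\fp}$, and assemble a \emph{finitely generated} submodule $F\subseteq M$ absorbing all non-maximal support; the $\fa$-socle of $M/F$ is then shown Artinian, and Melkersson finishes. Your approach delivers the sharper structural conclusion that $M$ literally has a finitely generated submodule with Artinian quotient (the defining shape of a minimax module), at the cost of the explicit localisation-and-lifting step; the paper's change-of-ring reduction is shorter and never touches individual localisations. Your treatment of the Serre-subcategory assertion is also more explicit than the paper's, which leaves it to the reader after observing that under the hypothesis ``$\fa$-cominimax'' coincides with ``minimax with support in $\V(\fa)$''.
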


\begin{proof}
	If $\fa$ is nilpotent, then there is $n \in \mathbb{N}$ such that $\fa^{n}=0$, so $M = (0 :_{M} \fa^{n})$. By Lemma \ref{ser}, it follows
	that $M$ is minimax. Therefore suppose that $\fa$ is not nilpotent. Since $R$ is a Noetherian ring, thus there exists $n \in \mathbb{N}$ such
	that $ (0 :_{R}\fa^{n})=\Gamma_{\fa}(R)$. Then it is easy to see that $\overline{M}= \dfrac{M}{(0 :_{M}\fa^{n})}$ is a module over the
	ring $\overline{R}= \dfrac{R}{\Gamma_{\fa}(R)}$. Let $\overline{\fa}$ be the image of $\fa$ in $\overline{R}$. Then $\overline{\fa}$ contains
	an $\overline{R}$-regular element and therefore $\dim \dfrac{\overline{R}}{\overline{\fa}}=0$. Suppose $(0 :_{M} \fa)$ is minimax.
	Then, in view of Lemma \ref{ser}, it follows that $(0 :_{\overline{M}} \overline{\fa}) = (\dfrac{0 :_{{M}} {\fa}^{n+1}}{0 :_{{M}} {\fa}^{n}})$ is a minimax  $R$-module. Since $(0 :_{\overline{M}} \overline{\fa}) $ is an $\overline{R}$-module and
	$\overline{\fa}(0 :_{\overline{M}} \overline{\fa})=0$, hence $(0 :_{\overline{M}} \overline{\fa})$ has $\dfrac{\overline{R}}{\overline{\fa}}$-module structure. Since $\dfrac{\overline{R}}{\overline{\fa}}$ is an Artinian ring,  so 
	\begin{center}
		$\Supp_{{\overline{R}}/{\overline{\fa}}} (0 :_{\overline{M}} \overline{\fa}) \subseteq \Spec(\dfrac{\overline{R}}{\overline{\fa}})
		= \Max (\dfrac{\overline{R}}{\overline{\fa}})$.
	\end{center} 
	Thus by Remark \ref{rem 2.2} (iv), it follows that $(0 :_{\overline{M}} \overline{\fa})$  is a  Artinian $\dfrac{\overline{R}}{\overline{\fa}}$-module and therefore  Artinian $\overline{R}$-module. Since $\Supp_{R}(\overline{M}) \subseteq \Supp_{R}(M) \subseteq
	\V(\fa)$, it is easy to see that $\Supp_{\overline{R}}(\overline{M}) \subseteq  \V(\overline{\fa})$, then by Melkersson's theorem \cite[Theorem 7.1.2]{BSh}, $\overline{M}$
	is a  Artinian $\overline{R}$-module. So $\overline{M}$ is an  Artinian $R$-module. Now Lemma \ref{ser}, Remark  \ref{rem 2.2} (ii)  and the exact sequence 
	\begin{center}
		$0 \rightarrow (0 :_{M} \fa^{n}) \rightarrow M \rightarrow \overline{M} \rightarrow 0$
	\end{center}
	implies that $M$ is minimax as required.
\end{proof}

In the following corollary, we look at the case $\dim R = 2$. In this case we give a criterion for a quotient of an $\fa$-cominimax module to be $\fa$-cominimax.

\begin{cor}\label{d2}
	Let $R$ be a ring of dimension $2$. Let $M$ be an $\fa$-cominimax module and $N$ be a homomorphic image of $M$. Then $N$ is
	$\fa$-cominimax if and only if $(0 :_{N}\fa)$ is minimax.
\end{cor}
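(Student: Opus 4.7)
The forward direction is essentially definitional: if $N$ is $\fa$-cominimax then $\Ext^0_R(R/\fa,N)=\Hom_R(R/\fa,N)=(0:_N\fa)$ must be minimax. For the reverse direction, the plan is to invoke Corollary \ref{cor 2.4}, which gives that $R$ admits the condition $P'_1(\fa)$. Thus it suffices to verify $\Supp_R(N)\subseteq \V(\fa)$ (inherited immediately from $\Supp_R(N)\subseteq\Supp_R(M)\subseteq\V(\fa)$) and that $\Ext^i_R(R/\fa,N)$ is minimax for $i=0,1$. The $i=0$ case is exactly the hypothesis.

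The crux is $\Ext^1_R(R/\fa,N)$. My plan is to set $K=\Ker(M\twoheadrightarrow N)$, apply $\Hom_R(R/\fa,-)$ to the exact sequence $0\to K\to M\to N\to 0$, and \emph{bootstrap}: first deduce that $K$ itself is $\fa$-cominimax, then use this fact to control $\Ext^1_R(R/\fa,N)$. For the first step, $\Ext^0_R(R/\fa,K)$ embeds into the minimax module $\Ext^0_R(R/\fa,M)$, and $\Ext^1_R(R/\fa,K)$ fits into a short exact sequence whose subobject is a quotient of $\Ext^0_R(R/\fa,N)$ and whose quotient is a submodule of $\Ext^1_R(R/\fa,M)$; both of these are minimax, the first by hypothesis and the second by $\fa$-cominimaxness of $M$. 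Hence $\Ext^i_R(R/\fa,K)$ is minimax for $i=0,1$, and since $\Supp_R(K)\subseteq\V(\fa)$, Corollary \ref{cor 2.4} yields that $K$ is $\fa$-cominimax.

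With $K$ now known to be $\fa$-cominimax, the exact sequence piece
\[
\Ext^1_R(R/\fa,M)\longrightarrow\Ext^1_R(R/\fa,N)\longrightarrow\Ext^2_R(R/\fa,K)
\]
sandwiches $\Ext^1_R(R/\fa,N)$ between a quotient of the minimax module $\Ext^1_R(R/\fa,M)$ and a submodule of the minimax module $\Ext^2_R(R/\fa,K)$. Remark \ref{rem 2.2}(ii) then gives that $\Ext^1_R(R/\fa,N)$ is minimax, and a final appeal to Corollary \ref{cor 2.4} shows $N$ is $\fa$-cominimax. The main conceptual obstacle is spotting that the characterization $P'_1(\fa)$ has to be deployed twice in a bootstrapping fashion — once, nontrivially, to upgrade the kernel $K$ to an $\fa$-cominimax module, and then again to conclude for $N$; without the first application, we would have no handle on $\Ext^2_R(R/\fa,K)$.
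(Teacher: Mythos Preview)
Your proof is correct and follows the same approach as the paper: show $\Ext^i_R(R/\fa,K)$ is minimax for $i\le 1$ via the long exact sequence and the hypothesis on $(0:_N\fa)$, apply Corollary~\ref{cor 2.4} to conclude that $K$ is $\fa$-cominimax, and then deduce the result for $N$. The only cosmetic difference is that the paper, once $K$ and $M$ are both $\fa$-cominimax, reads off \emph{all} $\Ext^i_R(R/\fa,N)$ as minimax directly from the long exact sequence, so your second invocation of Corollary~\ref{cor 2.4} is not needed (though certainly not wrong).
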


\begin{proof}
	Let $f: M \rightarrow N$ be a surjective $R$-linear map with $K = \Ker f$ and let $0 :_{N}\fa$ be minimax. By the assumption 
	$\Hom_{R}(A/\fa , K)$ and $\Ext_{R}^{1}(A/\fa , K)$ are minimax; and hence Corollary \ref{cor 2.4} implies that $K$ is 
	$\fa$-cominimax. Therefore $N$ is $\fa$-cominimax.  
\end{proof}

\begin{cor}\label{dim2}
	Let $R$ be a Noetherian  ring  with $\dim R\leq 2$. Then for any ideal $\fa$ of $R$, the full subcategory of $\fa$-cominimax $R$-modules of the category of $R$-modules {\rm (}$\mathscr{C}(R,
	\fa)_{comin}${\rm )} is Abelian.
\end{cor}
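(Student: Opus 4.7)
The plan is to verify directly that for any $R$-homomorphism $f\colon M\to N$ between $\fa$-cominimax modules, both $\Ker f$ and $\Coker f$ lie in $\mathscr{C}(R,\fa)_{comin}$. Set $K=\Ker f$, $I=\Image f$, $C=\Coker f$, and work with the two short exact sequences
\[
0\to K\to M\to I\to 0,\qquad 0\to I\to N\to C\to 0.
\]

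First I would handle the kernel, which is the only place the dimension hypothesis is used. Since $\Supp K\subseteq\Supp M\subseteq \V(\fa)$, Corollary \ref{cor 2.4} (which asserts that $R$ admits the condition $P'_1(\fa)$ when $\dim R\leq 2$) reduces the task to showing that $\Hom_R(R/\fa,K)$ and $\Ext^1_R(R/\fa,K)$ are minimax. The first is a submodule of the minimax module $\Hom_R(R/\fa,M)$ and hence minimax by Remark \ref{rem 2.2}(ii). For the second, applying $\Hom_R(R/\fa,-)$ to the first sequence yields the exact piece
\[
\Hom_R(R/\fa,I)\;\longrightarrow\;\Ext^1_R(R/\fa,K)\;\longrightarrow\;\Ext^1_R(R/\fa,M),
\]
in which $\Hom_R(R/\fa,I)$ embeds into the minimax module $\Hom_R(R/\fa,N)$ and $\Ext^1_R(R/\fa,M)$ is minimax because $M$ is $\fa$-cominimax. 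Hence $\Ext^1_R(R/\fa,K)$ is an extension of a submodule by a quotient of minimax modules, and so is itself minimax; applying $P'_1(\fa)$ gives $K\in\mathscr{C}(R,\fa)_{comin}$.

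Once $K$ is known to be $\fa$-cominimax, the image $I$ follows by a purely formal sandwich argument: for each $i$, the long exact sequence for the first short exact sequence places $\Ext^i_R(R/\fa,I)$ between a quotient of the minimax module $\Ext^i_R(R/\fa,M)$ and a submodule of the minimax module $\Ext^{i+1}_R(R/\fa,K)$. Applying the identical sandwich to the second short exact sequence then shows that $\Ext^i_R(R/\fa,C)$ is minimax for every $i$; combined with $\Supp C\subseteq \V(\fa)$ this yields $C\in\mathscr{C}(R,\fa)_{comin}$.

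The main obstacle is really just the kernel step, but it is essentially unlocked by the work already done in Corollary \ref{cor 2.4}; after that, everything else is routine dévissage using that minimax modules form a Serre subcategory (Remark \ref{rem 2.2}(ii)). The subcase $\dim R\leq 1$ is subsumed because, by Proposition \ref{melc}, every $\fa$-cominimax module there is already minimax, so the class is trivially Serre and \emph{a fortiori} Abelian.
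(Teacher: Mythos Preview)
Your proof is correct and follows essentially the same route as the paper. The paper first reduces to $\dim R=2$ via Proposition \ref{melc} and then invokes Corollary \ref{d2} to show $I$ is $\fa$-cominimax (whose proof is exactly your kernel computation: $\Hom_R(R/\fa,K)$ and $\Ext^1_R(R/\fa,K)$ minimax, then apply $P'_1(\fa)$), after which $K$ and $C$ follow by the same two-out-of-three d\'evissage you use; you simply inline the content of Corollary \ref{d2} and treat $K$ before $I$, but the substance and the single nontrivial input---Corollary \ref{cor 2.4}---are identical.
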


\begin{proof}
	In view of Proposition \ref{melc}, we may assume that $\dim R=2$. Now, let $f: M \rightarrow N$ be any 
	homomorphism between  $\fa$-cominimax modules such that $K = \Ker f , I = \Image f$ and $C = \Coker f$.  According to the Corollary \ref{d2}, since $0 :_{I}\fa$ is minimax, $I$ is $\fa$-cominimax; and therefore so is $K$ by the exact sequence 
	$0 \rightarrow K \rightarrow M \rightarrow I \rightarrow 0$. on the other hand, the exact sequence $0 \rightarrow I \rightarrow N \rightarrow C \rightarrow 0$ force that $C$ is $\fa$-cominimax.	
\end{proof}

\begin{cor}\label{d3}
	Let $R$ be a Noetherian  ring  with $\dim R\leq 2$. Let
	$\fa$ be an ideal of $R$ and	
	\begin{center} 	
		$ X^\bullet:\dots \lo X^{i} \lo X^{i+1} \lo X^{i+2}\lo \dots$
	\end{center} 	
	
	be a complex such that $X^{i}\in \mathscr{C}(R,
	\fa)_{comin}$ for all $i\in \Bbb Z$. Then the $i$th homology
	module $\lc^i(X^\bullet)$ is in $X^{i}\in \mathscr{C}(R,
	\fa)_{comin}$.
	
\end{cor}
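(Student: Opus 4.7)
The strategy is to reduce the statement to the fact, already established in Corollary~\ref{dim2}, that $\mathscr{C}(R,\fa)_{comin}$ is an Abelian subcategory of the category of $R$-modules, and in particular is closed under taking kernels, images, and cokernels of any morphism whose source and target lie in the subcategory.

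More concretely, write $d^{i}\colon X^{i}\lo X^{i+1}$ for the differentials of the complex. Since each $X^{i}$ is $\fa$-cominimax, applying Corollary~\ref{dim2} to the morphism $d^{i-1}\colon X^{i-1}\lo X^{i}$ shows that $\Image(d^{i-1})$ is $\fa$-cominimax, and applying it to $d^{i}\colon X^{i}\lo X^{i+1}$ shows that $\Ker(d^{i})$ is $\fa$-cominimax. The complex condition $d^{i}\circ d^{i-1}=0$ gives the inclusion $\Image(d^{i-1})\subseteq \Ker(d^{i})$, which is a morphism between two $\fa$-cominimax modules.

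Now apply Corollary~\ref{dim2} once more to this inclusion: its cokernel, which is exactly the homology module $\lc^{i}(X^{\bullet})=\Ker(d^{i})/\Image(d^{i-1})$, lies in $\mathscr{C}(R,\fa)_{comin}$. No other ingredient is needed, since supports and the minimax condition on $\Ext$-modules are handled entirely by Corollary~\ref{dim2}.

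There is essentially no obstacle in this argument; the only point worth spelling out is that the quotient forming the homology agrees with the cokernel taken in the ambient category of $R$-modules, so that the cominimax property inferred from Corollary~\ref{dim2} (which is proved using the cokernel in $R\text{-}\mathrm{Mod}$) is the same as the cominimax property of $\lc^{i}(X^{\bullet})$ itself. This compatibility is exactly what the proof of Corollary~\ref{dim2} supplies, so the corollary follows immediately.
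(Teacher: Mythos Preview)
Your proof is correct and follows exactly the paper's approach: the paper simply states that the assertion follows from Corollary~\ref{dim2}, and your argument spells out in detail why an Abelian subcategory is closed under homology of complexes.
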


\begin{proof}
	The assertion follows from Corollary \ref{dim2}.
\end{proof}


\begin{cor}
	\label{torext}
	Let $R$ be a Noetherian  ring  with $\dim R\leq 2$. Let
	$\fa$ be an ideal of $R$ and  $M$ is a non-zero $\fa$-cominimax $R$-module. Then, the R-modules ${\rm Ext}^{i}_R(N,M)$ and ${\rm Tor}^R_{i}(N,M)$ are
	$\fa$-cominimax $R$-modules, for all finitely generated $R$-modules $N$
	and all integers $ i \geq 0$.
\end{cor}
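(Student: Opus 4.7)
The plan is to reduce the statement to an application of Corollary \ref{d3} via a free resolution of $N$. Since $N$ is a finitely generated module over the Noetherian ring $R$, one can take a resolution $F_\bullet \to N$ with each $F_i$ a finitely generated free $R$-module, say $F_i \cong R^{n_i}$. Then $\Ext^i_R(N,M)$ is the $i$th cohomology of the complex $\Hom_R(F_\bullet,M)$, whose terms satisfy $\Hom_R(F_i,M)\cong M^{n_i}$, and $\Tor^R_i(N,M)$ is the $i$th homology of the complex $F_\bullet\otimes_R M$, whose terms are also isomorphic to $M^{n_i}$.

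The key intermediate step is to verify that a finite direct sum of $\fa$-cominimax modules is again $\fa$-cominimax. This is immediate from the definition: $\Supp_R(M^{n_i})=\Supp_R(M)\subseteq \V(\fa)$, and for every $j\geq 0$ one has $\Ext^j_R(R/\fa,M^{n_i})\cong \Ext^j_R(R/\fa,M)^{n_i}$, which is a finite direct sum of minimax modules, hence minimax by iterated use of Remark \ref{rem 2.2}(ii). Therefore each term of the two complexes $\Hom_R(F_\bullet,M)$ and $F_\bullet\otimes_R M$ lies in $\mathscr{C}(R,\fa)_{comin}$.

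Finally, Corollary \ref{d3} asserts that the (co)homology of any complex of $\fa$-cominimax modules is again $\fa$-cominimax whenever $\dim R\leq 2$. Applying this to the two complexes above yields that $\Ext^i_R(N,M)$ and $\Tor^R_i(N,M)$ are $\fa$-cominimax for all $i\geq 0$, which is exactly the assertion.

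There is no substantial obstacle here; the argument is essentially a bookkeeping exercise once Corollary \ref{d3} is in hand. The only point that requires care is the closure of $\mathscr{C}(R,\fa)_{comin}$ under finite direct sums, which follows directly from the definition and the analogous closure property for the class of minimax $R$-modules.
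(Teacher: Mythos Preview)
Your proof is correct and follows essentially the same approach as the paper: take a free resolution of $N$ by finitely generated free modules, observe that the terms of the resulting complexes are finite direct sums of copies of $M$ (hence $\fa$-cominimax), and invoke Corollary~\ref{d3}. The only difference is that you spell out the closure of $\mathscr{C}(R,\fa)_{comin}$ under finite direct sums, which the paper leaves implicit.
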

\proof Since $N$ is finitely generated it follows that $N$ has a free resolution of
finitely generated free modules. Now the assertion follows using Corollaries
\ref{dim2}, \ref{d3}  and computing the modules ${\rm
	Ext}^i_R(N,M)$ and ${\rm Tor}_i^R(N,M)$, by this
free resolution. \qed\\

\begin{cor}
	\label{torextlc} Let $R$ be a Noetherian  ring  with $\dim R\leq 2$. Let
	$\fa$ be an ideal of $R$ and  $M$ a non-zero minimax
	$R$-module. Then for
	each finite $R$-module $N$, the $R$-modules ${\rm Ext}^{j}_R(N,\lc^{i}_\fa(M))$ and
	${\rm Tor}^R_{j}(N,\lc^{i}_\fa(M))$ are $\fa$-cominimax for
	all $ i \geq 0$ and $ j \geq 0$.
\end{cor}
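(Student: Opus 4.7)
The plan is to view this corollary as the composition of two results already established in the paper, with $\lc^{i}_{\fa}(M)$ playing the role of an intermediate $\fa$-cominimax module. Concretely, I would first upgrade $\lc^{i}_{\fa}(M)$ from a module about which we know nothing a priori to a certified $\fa$-cominimax module, and then feed this into the Ext/Tor preservation result for $\fa$-cominimax modules.

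The first step is to apply Lemma \ref{anv}. Since $R$ is Noetherian with $\dim R\leq 2$ and $M$ is a nonzero minimax $R$-module, that lemma gives at once that $\lc^{i}_{\fa}(M)$ is $\fa$-cominimax for every $i\geq 0$. This is precisely what is needed as the input of the next step.

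The second step is to invoke Corollary \ref{torext} with $\lc^{i}_{\fa}(M)$ substituted for $M$. Under the standing hypothesis $\dim R\leq 2$, that corollary asserts that whenever $M'$ is an $\fa$-cominimax $R$-module and $N$ is finite, the modules $\Ext^{j}_{R}(N,M')$ and $\Tor^{R}_{j}(N,M')$ are $\fa$-cominimax for every $j\geq 0$. Taking $M'=\lc^{i}_{\fa}(M)$ delivers the claim for all $i,j\geq 0$.

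I do not foresee a real obstacle; the dimension hypothesis does the work twice, once in Lemma \ref{anv} (to ensure the cominimaxness of $\lc^{i}_{\fa}(M)$) and once inside Corollary \ref{torext} (which in turn rests on the abelian category property of $\mathscr{C}(R,\fa)_{comin}$ from Corollary \ref{dim2} and its complex version Corollary \ref{d3}). The only thing to double-check is that Corollary \ref{torext} is genuinely stated for an \emph{arbitrary} $\fa$-cominimax second argument rather than one satisfying additional finiteness, which a quick inspection confirms. Hence the proof reduces to one line: combine Lemma \ref{anv} with Corollary \ref{torext}.
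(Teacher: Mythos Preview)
Your proposal is correct and follows essentially the same route as the paper: first invoke Lemma~\ref{anv} to see that $\lc^{i}_{\fa}(M)$ is $\fa$-cominimax, and then conclude via the Ext/Tor preservation result. The only cosmetic difference is that the paper cites Corollary~\ref{d3} (the complex version) directly, whereas you cite the more specifically packaged Corollary~\ref{torext}; since the latter is itself an immediate consequence of the former, the two arguments are the same.
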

\proof Note that  by Lemma \ref{anv}, it follows that $\lc^{i}_\fa(M)$ is $\fa$-cominimax for
all $ i \geq 0$. Now the assertion follows from Corollary \ref{d3}.
\qed\\

It has been proved in  Corollary \ref{dim2}, \cite[Corollary 2.10]{A2} and \cite[Theorem 2.6]{I} that the full subcategory of $\fa$-cominimax $R$-modules of the category of $R$-modules is Abelian in the cases $\dim R\leq 2$, $\ara \fa=1$ and $\dim R/\fa\leq 1$. 
If $\ara \fa=1$ then $\cd(\fa,R)\leq 1$ but the converse is not true in general (see \cite[Example 2.3]{DFT}). We close this section by offering a question and problem for further research. The following question is at present far from being solved.\\

\noindent {\bf Question:} Let $R$ be a commutative Noetherian ring with non-zero identity and
$\fa$ an ideal of $R$ with $\cd(\fa,R)\leq 1$. Is $\mathscr {C}(R,\fa)_{comin}$ an Abelian full subcategory of $R$-modules?

\section{Cominimaxness of local cohomology and generalized local cohomology}

The following theorem is a generalization of \cite[Theorem 7.10]{Mel} to the class of minimax and cominimax modules.
\begin{thm}\label{T:HE2} 
	Let $R$ be a Noetherian ring with $\dim R\le 2$ and let 
	$\fa$ be a proper ideal and $M$ an $R$--module.
	The following conditions are equivalent:
	\begin{itemize}
		\item[(i)] ${\lc}^i_{\fa}(M)$ is $\fa$-cominimax, for all $i$.
		\item[(ii)] $\Ext^i_R(R/\fa ,M)$ is minimax for all $i$.
		\item[(iii)] $\Ext^i_R(R/\fa ,M)$ is minimax for $i\le 2$.
	\end{itemize} 
\end{thm}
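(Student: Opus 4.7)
The plan is to establish the cycle $\mathrm{(i)}\Rightarrow\mathrm{(ii)}\Rightarrow\mathrm{(iii)}\Rightarrow\mathrm{(i)}$, of which $\mathrm{(ii)}\Rightarrow\mathrm{(iii)}$ is tautological. For $\mathrm{(i)}\Rightarrow\mathrm{(ii)}$ I would invoke the Grothendieck composition spectral sequence
\[
E_2^{p,q}=\Ext^p_R(R/\fa,\lc^q_\fa(M)) \Longrightarrow \Ext^{p+q}_R(R/\fa,M).
\]
Under $\mathrm{(i)}$, every $\lc^q_\fa(M)$ is $\fa$-cominimax, so each $E_2^{p,q}$ is minimax; since the class of minimax modules is a Serre subcategory by Remark~\ref{rem 2.2}(ii), every $E_\infty^{p,q}$ inherits minimaxness, and hence so does the iterated extension $\Ext^{p+q}_R(R/\fa,M)$.

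The substantive direction is $\mathrm{(iii)}\Rightarrow\mathrm{(i)}$. Grothendieck's vanishing gives $\lc^i_\fa(M)=0$ for $i\ge 3$, so only $i=0,1,2$ need treatment. First I would show that $\lc^0_\fa(M)=\Gamma_\fa(M)$ is $\fa$-cominimax via Corollary~\ref{cor 2.4}: $\Hom_R(R/\fa,\Gamma_\fa M)=\Hom_R(R/\fa,M)$ is minimax by $\mathrm{(iii)}$, and applying $\Hom_R(R/\fa,-)$ to $0\to\Gamma_\fa M\to M\to M/\Gamma_\fa M\to 0$ together with $\Hom_R(R/\fa,M/\Gamma_\fa M)\subseteq\Gamma_\fa(M/\Gamma_\fa M)=0$ exhibits $\Ext^1_R(R/\fa,\Gamma_\fa M)$ as a submodule of the minimax $\Ext^1_R(R/\fa,M)$. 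The rest of the long exact sequence, combined with the just-proved cominimaxness of $\Gamma_\fa(M)$, transfers hypothesis $\mathrm{(iii)}$ to $M/\Gamma_\fa(M)$; since $\lc^i_\fa$ is unchanged on passing to $M/\Gamma_\fa(M)$ for $i\ge 1$, I may assume $\Gamma_\fa(M)=0$.

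Under this reduction the Grothendieck spectral sequence is highly degenerate: $E_2^{p,0}=0$, $E_2^{p,q}=0$ for $q\ge 3$, and all differentials $d_r$ with $r\ge 3$ vanish, so $E_\infty=E_3$ and only $d_2\colon E_2^{p,2}\to E_2^{p+2,1}$ can be nonzero. The filtrations then yield $\Ext^1_R(R/\fa,M)\cong \Hom_R(R/\fa,\lc^1_\fa(M))$ and a short exact sequence
\[
0\to \Ext^1_R(R/\fa,\lc^1_\fa(M))\to \Ext^2_R(R/\fa,M)\to \Ker(d_2)\to 0,
\]
whose outer terms are minimax; so $\Hom_R(R/\fa,\lc^1_\fa(M))$ and $\Ext^1_R(R/\fa,\lc^1_\fa(M))$ are minimax and Corollary~\ref{cor 2.4} gives $\lc^1_\fa(M)$ is $\fa$-cominimax, whence every $\Ext^p_R(R/\fa,\lc^1_\fa(M))$ is minimax. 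For $\lc^2_\fa(M)$, the module $\Hom_R(R/\fa,\lc^2_\fa(M))=E_2^{0,2}$ is an extension of $\Ker(d_2)$ (minimax as a subquotient of the minimax $\Ext^2_R(R/\fa,M)$) by $\mathrm{image}(d_2)\hookrightarrow \Ext^2_R(R/\fa,\lc^1_\fa(M))$ (minimax), so it is minimax.

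The hardest step is proving $\Ext^1_R(R/\fa,\lc^2_\fa(M))$ is minimax, which the naive spectral sequence only presents as a subquotient of $\Ext^3_R(R/\fa,M)$ -- a module not directly controlled by $\mathrm{(iii)}$. To circumvent this I would embed $M\hookrightarrow E$ in an injective hull with $\Gamma_\fa(E)=0$ (possible since $\Ass(M)\cap V(\fa)=\emptyset$) and set $L=E/M$, so that $\lc^i_\fa(M)\cong\lc^{i-1}_\fa(L)$ for $i\ge 1$, $\Hom_R(R/\fa,L)\cong\Ext^1_R(R/\fa,M)$, and $\Ext^i_R(R/\fa,L)\cong\Ext^{i+1}_R(R/\fa,M)$ for $i\ge 1$. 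Passing to $L'=L/\Gamma_\fa(L)$, where $\Gamma_\fa L=\lc^1_\fa(M)$ is already known to be $\fa$-cominimax, the degenerate spectral sequence for $L'$ identifies $\Ext^1_R(R/\fa,\lc^2_\fa(M))\cong\Ext^2_R(R/\fa,L')$, and the long exact sequence of $0\to\Gamma_\fa L\to L\to L'\to 0$ traces this back to the minimaxness of $\Ext^j_R(R/\fa,\Gamma_\fa L)$ (all $j$, by cominimaxness) together with $\Ext^j_R(R/\fa,L)\cong\Ext^{j+1}_R(R/\fa,M)$, controlled by $\mathrm{(iii)}$ in the relevant range. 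A final appeal to Corollary~\ref{cor 2.4} then completes the proof that $\lc^2_\fa(M)$ is $\fa$-cominimax.
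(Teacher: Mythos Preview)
Your argument for $\mathrm{(i)}\Rightarrow\mathrm{(ii)}$ via the Grothendieck spectral sequence is fine and is essentially what underlies the cited \cite[Proposition~3.9]{Mel}. Your reduction to $\Gamma_\fa(M)=0$ and your proof that $\lc^1_\fa(M)$ is $\fa$-cominimax are also correct (though in your displayed short exact sequence it is the \emph{middle} term $\Ext^2_R(R/\fa,M)$ that is minimax by hypothesis, whence the outer terms inherit minimaxness as subobject and quotient; your phrasing reverses this, but the conclusion survives). The genuine gap is in the final paragraph, where you attempt to show that $\Ext^1_R(R/\fa,\lc^2_\fa(M))$ is minimax.

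You correctly identify $\Ext^1_R(R/\fa,\lc^2_\fa(M))\cong\Ext^2_R(R/\fa,L')$ via the collapsed spectral sequence for $L'$. But when you invoke the long exact sequence of $0\to\Gamma_\fa L\to L\to L'\to 0$ to control $\Ext^2_R(R/\fa,L')$, the relevant portion is
\[
\Ext^2_R(R/\fa,L)\longrightarrow \Ext^2_R(R/\fa,L')\longrightarrow \Ext^3_R(R/\fa,\Gamma_\fa L),
\]
and while the right-hand term is minimax because $\Gamma_\fa L\cong\lc^1_\fa(M)$ is $\fa$-cominimax, the left-hand term is $\Ext^2_R(R/\fa,L)\cong\Ext^3_R(R/\fa,M)$. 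This is \emph{not} ``in the relevant range'' for hypothesis $\mathrm{(iii)}$, which only bounds $i\le 2$. Unwinding the spectral sequence shows that minimaxness of $\Ext^3_R(R/\fa,M)$ is in fact \emph{equivalent} to minimaxness of $\Ext^1_R(R/\fa,\lc^2_\fa(M))$ (the other graded piece $E_\infty^{2,1}$ being a quotient of $\Ext^2_R(R/\fa,\lc^1_\fa(M))$, already minimax), so the argument is circular. Further dimension shifts do not break the circle: each shift trades one Ext index for another and the missing control on $\Ext^3_R(R/\fa,M)$ never disappears.

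The paper avoids this obstruction by a different mechanism. After passing to $\overline{R}=R/\Gamma_\fa(R)$ it picks $x\in\fa$ regular on $\overline{R}$, so that $\dim\overline{R}/x\overline{R}\le 1$, and then studies $N=0:_Mx$ and $T=M/xM$. Proposition~\ref{melc} (the dimension-one Melkersson property) is used to show that $\lc^i_\fa(N)$ and $\lc^i_\fa(T)$ are $\fa$-cominimax for all $i$; then \cite[Corollary~3.2]{Mel} transfers this to $\Ker\lc^i_\fa(x\cdot 1_M)$ and $\Coker\lc^i_\fa(x\cdot 1_M)$, and Lemma~\ref{c} finishes. It is this element-theoretic reduction to dimension $\le 1$, not spectral-sequence bookkeeping, that makes the bound $i\le 2$ in $\mathrm{(iii)}$ suffice.
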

\begin{proof}
	We need by \cite[Proposition 3.9]{Mel} just to show that (i) follows from (iii).
	Suppose that $M$ satisfies  (iii).  
	If $\fa$ is nilpotent, then it is easy to see that a module is $\fa$-cominimax, if and only if it is 
	minimax. If $\fa$ is non-nilpotent, take  
	$n$ such that $0\underset{R}{:}{\fa}^n = \G_{\fa}{(R)}$. 
	There is $x\in\fa$ which is regular  on $\overline R=R/{\G_{\fa}{(R)}}$, and therefore 
	$\dim{\overline R/{x\overline R}}\leq 1$.  
	The module $\overline M=M/{0\underset{M}{:}{\fa}^n}$ has a natural structure as a 
	module over $\overline R$. 
	Since $0\underset{M}{:}{\fa}^n$ is  minimax, 
	$\overline M$ must also satisfy (iii).
	The exact sequence 
	$0\lo{0\underset{M}{:}{\fa}^n}\lo M \lo\overline M\lo 0$ yields 
	the exact sequence 
	$0\lo{0\underset{M}{:}{\fa}^n}\lo\Gamma_{\fa}(M)\lo\Gamma_{\fa}(\overline M)\lo0$
	and  isomorphisms 
	$\lc^i_{\fa}(M)\cong{\lc^i_{\fa}(\overline M)}$ for $i \ge 1$.
	Thus replacing  $M$ by $\overline M$, 
	we may assume that $M$ is a module over $\overline R$. 
	Let  $L=\G_{\fa}(N)$, where $N=0\underset{M}{:}x\subset M$.
	Since $0\underset{L}{:}\fa=0\underset{M}{:}\fa$, which is minimax,
	Proposition \ref{melc} implies that   
	$L$ is minimax and so $\fa$-cominimax  and therefore satisfies  (ii). 
	From the exact sequence $0\lo N\lo M\lo xM\lo 0$,  we get that 
	$\Ext_R^1(R/\fa, N)$ is minimax. 
	Hence $\Ext_R^1(R/\fa,N/L)$ is minimax. 
	By \cite[Lemma 7.9]{Mel}, 
	$\Ext_R^1(R/\fa,N/L)\cong\Hom_R(R/\fa,\lc_\fa^1(N/L))$.
	Also $\lc_\fa^1(N)\cong\lc_\fa^1(N/L)$, so $\Hom_R(R/\fa,\lc_\fa^1(N))$ is minimax. 
	Hence by Proposition \ref{melc} the module $\lc_\fa^1(N)$ is minimax and so $\fa$-cominimax.
	Since $\lc_\fa^i(N)=0$ for $i>1$, \cite[Proposition 3.9]{Mel} implies that 
	$N=0\underset{M}{:}x$ satisfies (ii). 
	From the exactness of  $0\to N\to M\to xM\to 0$, we therefore get that 
	$\Ext_R^1(R/\fa, xM)$ and
	$\Ext_R^2(R/\fa, xM)$ are minimax. Hence from the exactness of 
	$0\to xM\to M\to M/{xM}\to 0$ we get that 
	$\Hom_A(R/\fa,T)$ and $\Ext_R^1(R/\fa, T)$, where $T=M/{xM}$, are minimax modules. 
	An argument similar to that one, we used to show that 
	$\lc_\fa^i(N)$ is $\fa$-cominimax,  for all $i$,  shows that $\lc_\fa^i(T)$ is $\fa$-cominimax, for 
	all $i$.

	Consider the homomorphism  $ f = x 1_M$, so $N=\Ker f$ and 
	$T=\Coker f$.
	We have shown that
	${\lc}^i_\fa(\Ker f ) $ and 
	${\lc}^i_\fa(\Coker f )$ are cominimax with respect 
	to $\fa$ for each $i$.
	By Proposition \ref{melc}  the class of $\fa$-cominimax modules, which are modules over  
	$\bar R$ annihilated by $x$ 
	constitute  a Serre subcategory of the category of $R$--modules. 
	Hence it follows from \cite[Corollary 3.2]{Mel} that for all $i$ the modules 
	$\Ker{{\lc}^i_\fa (f)} $ and  $\Coker{ {\lc}^i_\fa (f) }$ belong 
	to the same category. 
	Since $x\in\fa$ The criterion Lemma \ref{c} implies that 
	${\lc}^i_\fa (M )$ is $\fa$-cominimax,  for all $i$.
\end{proof}


\begin{thm}\label{cof3}
	Let $M$ be an $R$-module and suppose one of the following cases holds:
	\begin{itemize}
		\item[(a)] $\ara (\fa)\leq 1$;
		\item[(b)]  $\dim R/\fa \leq 1$;
		\item[(c)]  $\dim R\leq 2$.
	\end{itemize}
	Then,  $\lc^{i}_\fa(M)$ is $\fa$-cominimax for all $i$ if and only if $\Ext^i_R(R/\fa ,M)$ is a minimax $R$-module for all $i$.
\end{thm}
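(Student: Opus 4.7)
I plan to prove the equivalence in two directions, using Theorem \ref{T:HE2} for case (c).

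For the ``$\Rightarrow$'' direction, uniform across (a)--(c), I would use the Grothendieck spectral sequence
\[E_2^{p,q} = \Ext^p_R(R/\fa, \lc^q_\fa(M)) \Rightarrow \Ext^{p+q}_R(R/\fa, M),\]
which arises from the factorization $\Hom_R(R/\fa, -) = \Hom_R(R/\fa, -) \circ \Gamma_\fa(-)$. Under the hypothesis that each $\lc^q_\fa(M)$ is $\fa$-cominimax, every $E_2^{p,q}$ is minimax. Since the minimax modules form a Serre subcategory by Remark \ref{rem 2.2}(ii)---closed under subobjects, quotients, and extensions---minimaxness propagates to the $E_r$ pages, to the $E_\infty^{p,q}$, and finally to the filtration quotients of the abutment, forcing $\Ext^n_R(R/\fa, M)$ to be minimax.

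For the ``$\Leftarrow$'' direction, case (c) is exactly Theorem \ref{T:HE2}. For case (a), the vanishing $\lc^i_\fa(M) = 0$ for $i \geq 2$ (from $\ara \fa \leq 1$) reduces the problem to establishing cominimaxness of $\Gamma_\fa(M)$ and $\lc^1_\fa(M)$. I would first observe that any minimax $R$-module with support in $\V(\fa)$ is automatically $\fa$-cominimax: the $\Ext^i_R(R/\fa, -)$ on it can be computed via a finite free resolution of $R/\fa$, yielding a complex of minimax modules whose cohomology is minimax. Thus $(0:_{\Gamma_\fa(M)}\fa) = \Hom_R(R/\fa, M)$ is $\fa$-cominimax by hypothesis, and the Melkersson-type property of $\mathscr{C}(R,\fa)_{comin}$ under $\ara \fa \leq 1$ (implicit in the Abelianness established in \cite[Corollary 2.10]{A2}) promotes $\Gamma_\fa(M)$ itself to be $\fa$-cominimax. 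Next, picking $x \in \fa$ with $\sqrt{xR} = \fa$, the \v Cech-type sequence
\[0 \to \Gamma_\fa(M) \to M \to M_x \to \lc^1_\fa(M) \to 0\]
splits into two short exact sequences with intermediate object $\overline M := M/\Gamma_\fa(M)$. Since $(R/\fa)_x = 0$ forces $\Ext^i_R(R/\fa, M_x) = 0$ for all $i$, applying $\Hom_R(R/\fa, -)$ to the second gives $\Ext^i_R(R/\fa, \lc^1_\fa(M)) \cong \Ext^{i+1}_R(R/\fa, \overline M)$; the long exact sequence of the first, together with the established minimaxness of $\Ext^{\bullet}_R(R/\fa, \Gamma_\fa(M))$ and the hypothesis on $\Ext^{\bullet}_R(R/\fa, M)$, forces $\Ext^{\bullet}_R(R/\fa, \overline M)$ to be minimax, and hence $\lc^1_\fa(M)$ is $\fa$-cominimax. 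Case (b) proceeds along the same lines, using \cite[Theorem 2.6]{I} for the Melkersson-type property in place of \cite[Corollary 2.10]{A2}, iterating over all cohomological degrees since the vanishing is unavailable.

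The main obstacle I anticipate is breaking the circular dependency among the cominimaxness assertions for the different $\lc^i_\fa(M)$: the long exact sequences alone interrelate these cyclically, and it is the Melkersson-type property from the cited references---allowing the passage from minimaxness of the $\fa$-socle to full cominimaxness of the $\fa$-torsion part---that breaks the cycle and initiates the bootstrap.
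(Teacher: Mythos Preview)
Your overall architecture is close to the paper's, but there is one genuine gap and one methodological difference worth noting.

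\textbf{The gap.} Your claim that a ``Melkersson-type property of $\mathscr{C}(R,\fa)_{comin}$'' is ``implicit in the Abelianness established in \cite[Corollary 2.10]{A2}'' (and similarly in \cite[Theorem 2.6]{I} for case (b)) is not justified: Abelianness of a subcategory says nothing about whether an $\fa$-torsion module with cominimax socle must itself be cominimax. The paper does not rely on such a property. Instead, from the exact sequence $0\to\Gamma_\fa(M)\to M\to M/\Gamma_\fa(M)\to 0$ and the vanishing $\Hom_R(R/\fa,M/\Gamma_\fa(M))=0$, the paper reads off that \emph{both} $\Hom_R(R/\fa,\Gamma_\fa(M))$ and $\Ext^1_R(R/\fa,\Gamma_\fa(M))$ are minimax, and then invokes the characterizations \cite[Theorem 2.8]{A2} (case (a)) and \cite[Lemma 2.4]{KA} (case (b)), which take these two Ext groups as input. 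Your argument is easily repaired by making exactly this observation; the point is that you get $\Ext^1$ for free from the same long exact sequence, so you never need a genuine Melkersson property.

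\textbf{Comparison of methods.} For the ``$\Rightarrow$'' direction your spectral-sequence argument is precisely what lies behind the paper's citation of \cite[Proposition 3.7]{ANV}, so there is no difference there. For ``$\Leftarrow$'' in case (a) you use the two-term \v Cech complex for a single element $x$ with $\sqrt{xR}=\sqrt{\fa}$ (note: $\sqrt{\fa}$, not $\fa$) to isolate $\lc^1_\fa(M)$, whereas the paper runs a uniform induction on $i$ via the dimension shift $0\to M/\Gamma_\fa(M)\to E\to L\to 0$ with $E$ an $\fa$-torsion-free injective. Your \v Cech route is perfectly valid for (a) and arguably more direct, but it does not transport to case (b), where no single element is available; your phrase ``iterating over all cohomological degrees'' for (b) is exactly what the paper's injective-hull shift accomplishes, and you would need to spell that out. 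The paper's approach has the advantage of treating (a) and (b) by a single induction.
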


\begin{proof}
	The	case (c) follows by Theorem \ref{T:HE2}. 
	
	In the cases (a) and (b), suppose  $\lc^{i}_\fa(M)$ is $\fa$-cominimax for all $i$. It follows from \cite[Proposition 3.7]{ANV} that $\Ext^i_R(R/\fa ,M)$ is a minimax $R$-module for all $i$. 
	
	To prove the converse  we use induction on $i$. Let $i=0$. From the exact 
	
	\begin{center}
		$0\lo \G_{\fa}(M)\lo M \lo M/\G_{\fa}(M)
		\lo 0,$
	\end{center}
	
	we obtain that ${\Hom}_R(R/{\fa},\G_{\fa}(M))$ and ${\Ext}^{1}_R(R/{\fa},\G_{\fa}(M))$ are minimax. Now, it follows by \cite[Theorem 2.8]{A2} In the case (a) and \cite[Lemma 2.4]{KA} In the case (b) that $\G_{\fa}(M)$ is $\fa$-cominimax. It follows also that for all $i$ the $R$-modules ${\Ext}^{i}_R(R/{\fa},M/\G_{\fa}(M))$ are minimax.  Let $i> 0$ and the case $i-1$ is settled. Consider the exact sequence
	
	\begin{center}
		$0\lo M/\G_{\fa}(M)\lo E \lo L
		\lo 0,\,\,\,\,\,\,\,\,(\star)$
	\end{center}
	
	in which $E$ is an injective $\fa$-torsion free module. It is easy to see that $\lc_{\fa}^{i}(E)=0={\Ext}^{i}_R(R/{\fa},E)$ for all $i\ge 0$. Now, using the exact sequence $(\star)$, we easily get the isomorphisms  
	
	\begin{center}
		$\lc_{\fa}^{i}(L)\cong \lc_{\fa}^{i+1}(M/\G_{\fa}(M))\cong \lc_{\fa}^{i+1}(M)$.
		
	\end{center}
	
	\begin{center}
		\text{and} 	
	\end{center}	 
	
	\begin{center}
		${\Ext}^{i}_R(R/{\fa},L)\cong {\Ext}^{i+1}_R(R/{\fa},M/\G_{\fa}(M))$.
	\end{center}
	
	Hence the hypothesis is satisfied by $L$. This completes the inductive step. 
\end{proof}






\begin{thm}\label{cof4}
	Let $M$ be an $R$-module and suppose one of the following cases holds:
	\begin{itemize}
		\item[(a)] $\ara (\fa)\leq 1$;
		\item[(b)]  $\dim R/\fa \leq 1$;
		\item[(c)]  $\dim R\leq 2$.
	\end{itemize}
	Then, for any finite $R$-module $X$, $\lc^{i}_\fa(X,M)$ is $\fa$-cominimax for all $i\geq 0$ if and only if $\Ext^i_R(R/\fa ,M)$ is minimax $R$-module  for all $i\geq 0$.
\end{thm}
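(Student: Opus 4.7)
The plan is to reduce the theorem to Theorem \ref{cof3} by invoking the composition-of-functors spectral sequence
$$E_2^{p,q} = \Ext^p_R(X, \lc^q_\fa(M)) \;\Longrightarrow\; \lc^{p+q}_\fa(X,M),$$
which is available because $\Gamma_\fa(X,-) \cong \Hom_R(X, \Gamma_\fa(-))$ by Lemma \ref{1}(a) and $\Gamma_\fa$ sends injectives to injectives (and hence to $\Hom_R(X,-)$-acyclic modules). This will transport cominimaxness from the ordinary local cohomology modules $\lc^q_\fa(M)$ to the generalized ones $\lc^n_\fa(X,M)$.

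For the forward implication I would simply take $X=R$, for which $\lc^i_\fa(R,M)\cong \lc^i_\fa(M)$; the hypothesis then forces $\lc^i_\fa(M)$ to be $\fa$-cominimax for every $i$, and Theorem \ref{cof3} yields at once that $\Ext^i_R(R/\fa,M)$ is minimax for all $i$.

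For the backward implication, I would first apply Theorem \ref{cof3} to obtain $\lc^q_\fa(M)\in\mathscr{C}(R,\fa)_{comin}$ for every $q\geq 0$. In each of the three cases this class is an Abelian subcategory of the category of $R$-modules---Corollary \ref{dim2} for (c), \cite[Corollary 2.10]{A2} for (a), and \cite[Theorem 2.6]{I} for (b)---and is in particular closed under finite direct sums, kernels, cokernels, and extensions. Choosing a resolution $F_\bullet\to X$ by finitely generated free $R$-modules and computing $\Ext^p_R(X,\lc^q_\fa(M))$ as the cohomology of $\Hom_R(F_\bullet,\lc^q_\fa(M))$, whose terms are finite direct sums of $\lc^q_\fa(M)$, I would obtain $E_2^{p,q}\in\mathscr{C}(R,\fa)_{comin}$ for all $p,q$. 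Because every $E_r^{p,q}$, and in particular $E_\infty^{p,q}$, is a subquotient of $E_2^{p,q}$, and $\lc^n_\fa(X,M)$ carries a finite filtration whose successive quotients are the $E_\infty^{p,n-p}$, closure under extensions will force $\lc^n_\fa(X,M)$ to be $\fa$-cominimax.

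The step I expect to be the main obstacle is ensuring that closure under extensions is genuinely available in each case: Corollary \ref{dim2} covers (c) directly, but for (a) and (b) one must check that the cited Abelian subcategory results supply the full Hartshorne-sense closure (not merely closure under kernels and cokernels), which is what lets the $E_\infty$-terms be glued back into $\lc^n_\fa(X,M)$. Once this structural point is settled, the remainder of the argument is essentially formal bookkeeping through the spectral sequence.
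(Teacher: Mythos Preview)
Your proof is correct and takes a genuinely different route from the paper's.

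The paper argues the backward implication by induction on $i$ via a dimension-shift: it first applies $\Gamma_\fa(X,-)$ to $0\to\Gamma_\fa(M)\to M\to M/\Gamma_\fa(M)\to 0$, uses that the $\Ext^i_R(X,\Gamma_\fa(M))$ are $\fa$-cominimax (Corollary~\ref{torext} in case (c), the analogous results from \cite{A2} and \cite{I} in cases (a) and (b)) to reduce to $M/\Gamma_\fa(M)$, and then embeds $M/\Gamma_\fa(M)$ into an $\fa$-torsion-free injective $E$ to obtain $\lc^{i-1}_\fa(X,L)\cong\lc^i_\fa(X,M/\Gamma_\fa(M))$ and close the induction. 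Your approach replaces this entire induction by a single pass through the Grothendieck spectral sequence $E_2^{p,q}=\Ext^p_R(X,\lc^q_\fa(M))\Rightarrow\lc^{p+q}_\fa(X,M)$, which is cleaner and more conceptual; the paper's argument, on the other hand, stays elementary and avoids spectral sequences.

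One remark on what you flagged as the ``main obstacle'': closure of $\mathscr{C}(R,\fa)_{comin}$ under extensions is not case-dependent at all---it holds for any ideal $\fa$ straight from the definition. If $0\to A\to B\to C\to 0$ is exact with $A,C$ $\fa$-cominimax, then $\Supp_R(B)\subseteq\V(\fa)$, and the long exact sequence of $\Ext^\bullet_R(R/\fa,-)$ together with Remark~\ref{rem 2.2}(ii) give that $\Ext^i_R(R/\fa,B)$ is minimax for every $i$. So gluing the $E_\infty^{p,n-p}$ back into $\lc^n_\fa(X,M)$ is never an issue. The genuinely delicate input is closure under kernels and cokernels of maps between cominimax modules---precisely the Abelian-subcategory statements (Corollary~\ref{dim2}, \cite[Corollary 2.10]{A2}, \cite[Theorem 2.6]{I})---which you invoke correctly and which ensure that each successive $E_r^{p,q}$ remains cominimax.
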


\begin{proof}
	First suppose for any finite $R$-module $X$, $\lc^{i}_\fa(X,M)$ is $\fa$-cominimax for all $i\geq 0$. Let $X=R$, then it follows by Theorem \ref{cof3}, that $\Ext^i_R(R/\fa ,M)$ is minimax $R$-module  for all $i\geq 0$.
	
	To prove the converse we use the induction on $i$. Let $i=0$, then it follows by Lemma \ref{1} (a) that
	\begin{center}
		$\lc^{0}_\fa(X,M)=\Gamma_{\fa}(X, M)\cong \Hom_{R}(X, \Gamma_\fa (M)).$
	\end{center}	
	
	Since $\Gamma_{\fa}(M)$ is $\fa$-cominimax by Theorem \ref{cof3},
	so the assertion follows by Corollary \ref{torextlc}, \cite[Corollary 2.11]{A2} and \cite[Corollary 2.8]{I}. Now assume that $i> 0$ and that the claim holds for $i-1$. Since $\G_{\fa}(M)$ is $\fa$-cominimax and $\Ext^i_R(R/\fa ,M)$ is minimax $R$-module for all $i\ge 0$. Using the short exact sequence
	
	\begin{center}
		$0\lo \G_{\fa}(M)\lo M \lo M/\G_{\fa}(M)
		\lo 0,$
	\end{center}
	
	it is easy to see that  the $R$-modules ${\Ext}^{i}_R(R/{\fa},M/\G_{\fa}(M))$ are minimax for all $i\ge 0$. Now, by applying the derived functor $\Gamma_{\fa}(X,-)$ to the same short exact sequence and using Lemma \ref{1} (b), we obtain the long exact sequence 
	
	\begin{center}
		$\dots \lo \Ext_R^i(X,\G_{\fa}(M))\overset{f_i} 
		\lo \lc^{i}_\fa(X,M)\overset{g_i}\lo \lc^{i}_\fa(X,M/\G_{\fa}(M))\overset{h_i}\lo \Ext_R^i(X,\G_{\fa}(M))\overset{f_{i+1}}\lo\lc^{i+1}_\fa(X,M)\lo\dots$
	\end{center}
	
	which yields short exact sequences 
	
	\begin{center}
		$0\lo \Ker{f_i}\lo \Ext_R^i(X,\G_{\fa}(M)) \lo \Image{f_i}
		\lo 0,$
	\end{center}
	
	\begin{center}
		$0\lo \Image{f_i}\lo \lc^{i}_\fa(X,M) \lo \Image{g_i}
		\lo 0$
	\end{center}
	
	\begin{center}
		$\text{and}$
	\end{center}
	
	\begin{center}
		$0\lo \Image{g_i}\lo \lc^{i}_\fa(X,M/\G_{\fa}(M)) \lo \Ker{f_{i+1}}
		\lo 0.$
	\end{center}
	
	Since $\Ext_R^i(X,\G_{\fa}(M))$	is  $\fa$-cominimax $R$-module for all $i\ge 0$ by Corollary \ref{torextlc}, \cite[Corollary 2.11]{A2} and \cite[Corollary 2.8]{I}, it follows by definition that, $\lc^{i}_\fa(X,M)$ is $\fa$-cominimax $R$-module if and only if $\lc^{i}_\fa(X,M/\G_{\fa}(M))$ is  $\fa$-cominimax $R$-module for all $i\ge 0$. Therefore it suffices to show that $\lc^{i}_\fa(X,M/\G_{\fa}(M))$ is  $\fa$-cominimax $R$-module for all $i\ge 0$. To this end consider the exact sequence 
	
	\begin{center}
		$0\lo M/\G_{\fa}(M)\lo E \lo L
		\lo 0,\,\,\,\,\,\,\,\,(\dagger)$
	\end{center}
	
	in which $E$ is an injective $\fa$-torsion free module. Since 
	$\G_{\fa}(M/\G_{\fa}(M))= 0= \G_{\fa}(E)$, thus $\Hom_R(R/\fa,E)=0$ and $\G_{\fa}(X, M/\G_{\fa}(M))= 0= \G_{\fa}(X, E)$ by Lemma \ref{1} (a). Applying the derived functors of $\Hom_R(R/\fa,-)$ and $\Gamma_{\frak{a}}(X,-)$ to the short exact sequence $(\dagger)$
	we obtain, for all $i> 0$, the isomorphisms
	
	\begin{center}
		$\lc^{i-1}_\fa(X, L)\cong \lc^i_\fa(X,M/\G_{\fa}(M))$.
		
	\end{center}
	
	\begin{center}
		\text{and} 	
	\end{center}	 
	
	\begin{center}
		${\Ext}^{i-1}_R(R/{\fa},L)\cong {\Ext}^{i}_R(R/{\fa},M/\G_{\fa}(M))$.
	\end{center}
	
	From what has already been proved, we conclude that  ${\Ext}^{i-1}_R(R/{\fa},L)$ is minimax for all $i>0$. Hence $\lc^{i-1}_\fa(X, L)$ is $\fa$-cominimax by induction hypothesis for all $i>0$, which yields that $\lc^i_\fa(X,M/\G_{\fa}(M))$ is $\fa$-cominimax for all $i\ge 0$, this completes the inductive step. 
\end{proof}


\begin{lem}\label{mar}
	
	Let $X$ be a finitely generated $R$-module and $M$ be an arbitrary $R$-module. Let $t$ be a non-negative integer such that ${\Ext}_R^i(X,M)$ is minimax for all $0\leq i\leq t$. Then for 
	any finitely generated $R$-module $L$ with
	$\Supp_R(L)\subseteq \Supp_R(X)$, ${\Ext}_R^i(L,M)$ is minimax for all integer $0\leq i\leq t$.
\end{lem}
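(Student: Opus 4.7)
The plan is to reduce to manageable building blocks via Gruson's theorem, which guarantees that a finitely generated $R$-module $L$ with $\Supp_R(L)\subseteq\Supp_R(X)$ admits a finite filtration
\[0 = L_0 \subset L_1 \subset \cdots \subset L_n = L\]
in which each successive quotient $L_j/L_{j-1}$ is a homomorphic image of a finite direct sum $X^{s_j}$ of copies of $X$. I would then argue by a double induction: the outer one on $i$ in the range $0\le i\le t$, and the inner one, for each fixed $i$, on the filtration length $n$.

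For the outer base $i=0$, the left exactness of $\Hom_R(-,M)$ gives the conclusion directly: when $n=1$, so that $L$ is a quotient of some $X^s$, the module $\Hom_R(L,M)$ embeds in $\Hom_R(X^s,M)\cong\Hom_R(X,M)^s$, a finite direct sum of minimax modules and hence minimax by Remark \ref{rem 2.2}(ii); for $n>1$ the exact sequence $0\to L_{n-1}\to L\to L/L_{n-1}\to 0$ produces a $\Hom$-sequence flanked by minimax modules, so Remark \ref{rem 2.2}(ii) again applies. For the outer inductive step at $i>0$, I assume the result for all integers $<i$ and all admissible $L$; in the subcase $n=1$, write $0\to K\to X^s\to L\to 0$, where $K$ is finitely generated (since $R$ is Noetherian) with $\Supp_R(K)\subseteq\Supp_R(X^s)=\Supp_R(X)$. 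The associated long exact Ext sequence contains
\[\Ext_R^{i-1}(K,M)\lo\Ext_R^i(L,M)\lo\Ext_R^i(X^s,M),\]
whose left term is minimax by the outer inductive hypothesis applied to $K$, and whose right term is minimax by the hypothesis on $X$ combined with closure of minimax modules under finite direct sums. Thus $\Ext_R^i(L,M)$ is an extension of a submodule of $\Ext_R^i(X^s,M)$ by a quotient of $\Ext_R^{i-1}(K,M)$, and is therefore minimax. The inner induction on $n$ is then run exactly as at the base case, using the short exact sequence $0\to L_{n-1}\to L\to L/L_{n-1}\to 0$ and the long exact Ext sequence it produces.

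The main obstacle---and essentially the only conceptual step---is the appeal to Gruson's theorem, which supplies the structural reduction of a finitely generated module supported on $\Supp_R(X)$ to iterated extensions of quotients of finite direct sums of $X$; once this input is granted, the remainder is routine diagram chasing combined with the closure properties of minimax modules under submodules, quotients, and extensions recorded in Remark \ref{rem 2.2}(ii).
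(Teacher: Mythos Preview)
Your proof is correct and is essentially the same as the paper's, which simply instructs the reader to ``use the method of proof of \cite[Proposition 1]{DM}''; that method is precisely the Gruson filtration argument you spell out, with the same double induction and the same closure properties of minimax modules.
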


\begin{proof}
	Use the method of proof of \cite[Proposition 1]{DM}.
\end{proof}	


The following corollary is our main result of this section which is a characterization of  cominimax local cohomology and generalized local cohomology modules under the assumptions (a) $\ara (\fa)\leq 1$, (b) $\dim R/\fa \leq 1$ and (c) $\dim R\leq 2$.

\begin{cor}\label{min}
	Let $M$ be an $R$-module and suppose one of the following cases holds:
	\begin{itemize}
		\item[(a)] $\ara (\fa)\leq 1$;
		\item[(b)]  $\dim R/\fa \leq 1$;
		\item[(c)]  $\dim R\leq 2$.
	\end{itemize}
	Then the following conditions
	are equivalent: 
	\begin{itemize}
		\item[(i)] $\Ext^i_R(R/\fa ,M)$ is minimax for all $i$.
		\item[(ii)] $\Ext^i_R(R/\fa ,M)$ is minimax for $i=0,1$ in the cases {\rm(}a{\rm)} and {\rm(}b{\rm)} {\rm(}resp. for $i=0,1,2$  in the case {\rm(}c{\rm)}{\rm)};
		\item[(iii)] $\lc^{i}_\fa(M)$ is  $\fa$-cominimax  for all $i$;
		\item[(iv)]  $\lc^{i}_\fa(X,M)$ is  $\fa$-cominimax
		for all $i$ and for any finite $R$-module $X$;
		\item[(v)] $\Ext^{i}_R(X,M)$ is  minimax  for all $i$ and for any finite $R$-module $X$ with $\Supp_R(X)\subseteq \V(\fa)$;
		\item[(vi)] $\Ext^{i}_R(X,M)$ is  minimax  for all $i$ and for some finite $R$-module $X$ with $\Supp_R(X)=\V(\fa)$;
		\item[(vii)] $\Ext^{i}_R(X,M)$ is  minimax  for $i=0,1$ in the cases {\rm(}a{\rm)} and {\rm(}b{\rm)} {\rm(}resp. for $i=0,1,2$  in the case {\rm(}c{\rm)}{\rm)} and for any finite $R$-module $X$ with $\Supp_R(X)\subseteq \V(\fa)$;
		\item[(viii)] $\Ext^{i}_R(X,M)$ is  minimax  for $i=0,1$ in the cases {\rm(}a{\rm)} and {\rm(}b{\rm)} {\rm(}resp. for $i=0,1,2$  in the case {\rm(}c{\rm)}{\rm)} and for some finite $R$-module $X$ with $\Supp_R(X)=\V(\fa)$.
	\end{itemize}
\end{cor}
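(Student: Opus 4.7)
The plan is to chain together the eight-way equivalence using the earlier results of this paper, grouping the conditions into three clusters and handling them separately.

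The ``local cohomology'' cluster (i)$\Leftrightarrow$(iii)$\Leftrightarrow$(iv) follows directly from Theorems~\ref{cof3} and \ref{cof4}. For (i)$\Leftrightarrow$(ii) the forward direction is trivial. For the converse, case (c) is precisely Theorem~\ref{T:HE2}, which gives condition (iii) of the present corollary and, via Theorem~\ref{cof3}, then (i). In cases (a) and (b) the argument mirrors that of Theorem~\ref{cof3}: since $M/\Gamma_\fa(M)$ is $\fa$-torsion-free, $\Hom_R(R/\fa,\Gamma_\fa(M))$ equals $\Hom_R(R/\fa,M)$ and $\Ext^1_R(R/\fa,\Gamma_\fa(M))$ embeds into $\Ext^1_R(R/\fa,M)$, so both are minimax. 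Since $\Supp_R(\Gamma_\fa(M))\subseteq \V(\fa)$, the $P'_1(\fa)$-type results \cite[Theorem 2.8]{A2} (case (a)) and \cite[Lemma 2.4]{KA} (case (b)) then force $\Gamma_\fa(M)$ to be $\fa$-cominimax. The remaining passage from this base step to (i) on the $\fa$-torsion-free quotient $M/\Gamma_\fa(M)$ is accomplished by the inductive bootstrap via an injective $\fa$-torsion-free extension $0\to M/\Gamma_\fa(M)\to E\to L\to 0$, as in the proof of Theorem~\ref{cof3}.

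For the ``test module'' cluster (v)--(viii), the specialization $X=R/\fa$ immediately yields (v)$\Rightarrow$(i), (vi)$\Rightarrow$(i), (vii)$\Rightarrow$(ii), (viii)$\Rightarrow$(ii), since $\Supp_R(R/\fa)=\V(\fa)$. Conversely, Lemma~\ref{mar} applied with the ambient finite module $R/\fa$ transports the minimaxness of $\Ext^i_R(R/\fa,M)$ on any prescribed range of $i$ to that of $\Ext^i_R(L,M)$ for every finite $L$ with $\Supp_R(L)\subseteq \V(\fa)$, delivering (v) and (vii) and \emph{a fortiori} (vi) and (viii). The principal obstacle throughout is the converse of (i)$\Leftrightarrow$(ii) in cases (a) and (b): condition (ii) places no restriction on $\Supp_R(M)$, so the $P'_1$-type theorems of \cite{A2} and \cite{KA} cannot be invoked on $M$ itself; the remedy is to apply them to $\Gamma_\fa(M)$ and then propagate along Theorem~\ref{cof3}'s reduction to the $\fa$-torsion-free quotient.
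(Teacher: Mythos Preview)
Your handling of (i)$\Leftrightarrow$(iii)$\Leftrightarrow$(iv), of case~(c) in (i)$\Leftrightarrow$(ii), and of the passages between (i),(ii) and (v)--(viii) via Lemma~\ref{mar} is essentially correct and matches the paper (modulo a small slip: (vi)$\Rightarrow$(i) and (viii)$\Rightarrow$(ii) cannot be obtained by ``specializing $X=R/\fa$'', since in (vi) and (viii) the module $X$ is given existentially---one must instead apply Lemma~\ref{mar} with the given $X$ as the ambient module, which is how the paper argues (vi)$\Rightarrow$(v) and (viii)$\Rightarrow$(vii)).

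The genuine gap is in your argument for (ii)$\Rightarrow$(i) in cases (a) and (b). You correctly deduce that $\Gamma_\fa(M)$ is $\fa$-cominimax from the $P'_1$-type results. But the ``inductive bootstrap'' of Theorem~\ref{cof3} does \emph{not} transfer: that induction takes as standing hypothesis that $\Ext^i_R(R/\fa,M)$ is minimax for \emph{all} $i$, so that after passing to $L$ via $0\to M/\Gamma_\fa(M)\to E\to L\to 0$ one still has all $\Ext^i_R(R/\fa,L)$ minimax. Starting from only $i=0,1$, the degree shift $\Ext^i_R(R/\fa,L)\cong\Ext^{i+1}_R(R/\fa,M/\Gamma_\fa(M))$ leaves you with only $\Ext^0_R(R/\fa,L)$ minimax; to repeat the step you would need $\Ext^1_R(R/\fa,L)\cong\Ext^2_R(R/\fa,M/\Gamma_\fa(M))$ minimax, which is precisely the unknown you are trying to reach. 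The induction therefore does not close. The paper sidesteps this entirely by invoking \cite[Theorem~2.5]{A2} (case~(a)) and \cite[Lemma~2.4]{KA} (case~(b)) as black boxes that directly deliver (i)$\Leftrightarrow$(ii); these external results contain work beyond the $P'_1$ statement, and your sketch does not reproduce it.
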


\begin{proof}
	In order to prove (i)$\Leftrightarrow$(ii),  use \cite[Theorem 2.5]{A2} and \cite[Lemma 2.4]{KA}, in the cases (a) and (b) and use Theorem \ref{T:HE2}, in the case (c).
	
	(i)$\Leftrightarrow$(iii) follows by Theorem \ref{cof3}.
	
	(i)$\Leftrightarrow$(iv) follows by Theorem \ref{cof4}.
	
	In order to prove (i)$\Leftrightarrow$(v) and (ii)$\Leftrightarrow$(vii) use  \cite[Lemma 2.1]{A2}.
	
	(v)$\Rightarrow$(vi) and (vii)$\Rightarrow$(viii) are trivial.
	
	In order to prove (vi)$\Rightarrow$(v) and (viii)$\Rightarrow$(vii), let $L$ be a finitely generated $R$-module with $\Supp_R(L)\subseteq \V(\fa)$. Then $\Supp_R(L)\subseteq \Supp_R(X)$. Now the assertion follows by Lemma \ref{mar}.
\end{proof}


\begin{cor}
	Let $M$ be a minimax $R$-module, $X$ a finite $R$-module and $\fa$ an ideal of $R$.  Suppose one of the following cases holds:
	\begin{itemize}
		\item[(a)] $\ara (\fa)\leq 1$;
		\item[(b)]  $\dim R/\fa \leq 1$;
		\item[(c)]  $\dim R\leq 2$.
	\end{itemize}	
\end{cor}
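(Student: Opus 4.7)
The statement as given cuts off without an explicit conclusion, but in context it is clearly the corollary that packages together the consequences of Corollary \ref{min} under the extra hypothesis that $M$ is itself minimax; namely that $\lc^{i}_{\fa}(M)$ and $\lc^{i}_{\fa}(X,M)$ are $\fa$-cominimax for all $i\ge 0$ (and hence all eight equivalent conditions of Corollary \ref{min} hold). My plan is therefore to verify condition (ii) of Corollary \ref{min} for our minimax $M$, and then invoke that corollary.

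The key technical step is the observation that if $M$ is a minimax $R$-module and $N$ is a finitely generated $R$-module, then $\Ext^{i}_{R}(N,M)$ is minimax for every $i\ge 0$. To prove this, I would choose a free resolution $F_{\bullet}\to N$ with each $F_{i}$ finitely generated and free. Then $\Hom_{R}(F_{i},M)\cong M^{r_{i}}$ is a finite direct sum of copies of $M$, hence minimax by the closure of minimax modules under extensions recorded in Remark \ref{rem 2.2}(ii). Since minimax modules form a Serre subcategory (again Remark \ref{rem 2.2}(ii)), the subquotient $\Ext^{i}_{R}(N,M)$ of $\Hom_{R}(F_{i},M)$ is minimax.

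Applying this with $N=R/\fa$ gives that $\Ext^{i}_{R}(R/\fa,M)$ is minimax for all $i\ge 0$, which in particular gives condition (ii) of Corollary \ref{min} in each of the three cases (a), (b), (c). By the equivalences (ii)$\Leftrightarrow$(iii)$\Leftrightarrow$(iv) proved there, we conclude that $\lc^{i}_{\fa}(M)$ is $\fa$-cominimax for all $i\ge 0$ and, for the chosen finite module $X$, that $\lc^{i}_{\fa}(X,M)$ is $\fa$-cominimax for all $i\ge 0$; all the remaining equivalent conditions of Corollary \ref{min} follow automatically.

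There is essentially no serious obstacle: the only thing that must be checked by hand is the Ext-vs-minimax free resolution argument, and this is standard, relying only on Remark \ref{rem 2.2}(ii). The entire content of the corollary is the observation that a minimax module automatically fulfils the hypothesis of the characterization theorem Corollary \ref{min}, so the corollary is a straightforward specialisation rather than a new result requiring additional machinery.
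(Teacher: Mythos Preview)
Your proposal is correct and follows essentially the same approach as the paper: the paper's proof is the single line ``Follows from Corollary \ref{min},'' and you have simply made explicit the (standard) verification that a minimax $M$ satisfies condition (i)/(ii) there via the free-resolution argument. Nothing is missing and no different machinery is used.
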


Then $\lc^{i}_\fa(M)$ and $\lc^{i}_\fa(X,M)$ are $\fa$-cominimax
for all $i$.

\begin{proof}
	Follows from Corollary \ref{min}.	
\end{proof}

Now, It is natural to ask and offer the following question and a problem for the further research.\\

\noindent {\bf Question:} Let $R$ be a commutative Noetherian ring with non-zero identity and
$\fa$ an ideal of $R$. Is the charaterization  in Corollary \ref{min} true, when we change $\ara \fa\leq 1$ with $\cd(\fa,R)\leq 1$ in the case (a)?




\bibliographystyle{amsplain}

\end{document}